\documentclass[oneside]{amsart}
\usepackage{amsmath,amssymb,amsfonts,amsthm}
\usepackage{color}
\usepackage{graphicx}
\usepackage{geometry} 
\usepackage{amscd}
\usepackage{enumitem}

\geometry{a4paper,left=3.0cm, right=3.0cm, top=3cm, bottom=3cm,foot=1cm}

\title{Freedom of $h(2)$-variationality and metrizability of sprays}

\author[Elgendi]{S.G.~Elgendi}

\address{Salah G.~Elgendi, Institute of Mathematics, University of Debrecen,
  Debrecen, Hungary} \email{salah.ali@fsci.bu.edu.eg}

\author[Muzsnay]{Z.~Muzsnay}

\address{Zolt\'an Muzsnay, Institute of Mathematics, University of Debrecen,
  Debrecen, Hungary} \email{muzsnay@science.unideb.hu}
\urladdr{http://math.unideb.hu/muzsnay-zoltan}

\keywords{Euler-Lagrange equation, Lagrangain, spray, connection, holonomy,
  isotropic sprays.}

\subjclass[2000]{53C60, 53B40, 58B20, 49N45, 58E30.}

\thanks{The research was partially supported by the European Union's Seventh
  Framework Programme (FP7/2007-2013) under grant agreements no.~318202 and
  no.~317721.}

\def\blue#1{\textcolor[rgb]{0.0,0.0,1.0}{#1}}

%

%
\newcommand{\R}{{\mathbb R}}

\def\h{{\mathfrak h}}
\newcommand{\T}{{\mathcal T}}
\newcommand{\C}{{\mathcal C}}

\newcommand{\tm}{\T M}

\newcommand {\TM}{\mathcal T\hspace{-1pt}M}

\def\el{{\mathcal E}_S} 
\def\elh#1{{\mathcal {E}{_{S,#1}}}}
\def\elhp#1{{\mathcal {E}^+_{S,#1}}}

\def\hol{{\mathcal H_{S}}}
\def\holh#1{{\mathcal H_{S \hspace{-1pt},#1}}}
\def\m{m_{\scalebox{0.5}{$S$}}} 

\def\v{{\scalebox{0.8}{\ensuremath{\mathcal{V}}}}_{\scalebox{0.7}{\ensuremath{S}}}}
\def\smallv{{\scalebox{0.6}{\ensuremath{\mathcal{V}}}}_{\scalebox{0.5}{\ensuremath{S}}}}

\def\vh#1{{\scalebox{0.8}{\ensuremath{\mathcal{V}}}}_{\scalebox{0.7}{\ensuremath{S
        \hspace{-1pt},#1}}}}

\def\H{{{\mathcal D}_{\mathcal H}}}
\def\HC{{{\mathcal D}_{\mathcal H,C}}}

\def\+{\!+\!}
\def\n{\!-\!}
\def\={\!=\!}
\def\<{\!<\!}
\def\>{\!>\!}
\def\o{\!\otimes}

\setlength{\marginparwidth}{0.8in} \let\oldmarginpar\marginpar
\renewcommand\marginpar[1]{\oldmarginpar[\raggedleft\footnotesize #1]%
  {\blue{\raggedright \footnotesize \fbox{
      \begin{minipage}{1.0\linewidth}
        #1
      \end{minipage}
}}}}

\numberwithin{equation}{section} 
\numberwithin{figure}{section} 

\theoremstyle{plain}
\newtheorem*{theorem*}{Theorem}
\newtheorem{theorem}{Theorem}[section]
\newtheorem{lemma}[theorem]{Lemma}
\newtheorem{proposition}[theorem]{Proposition}

\theoremstyle{definition}
\newtheorem{definition}[theorem]{Definition}
\newtheorem{property}[theorem]{Property}
\theoremstyle{remark}
\newtheorem{example}{Example}
\newtheorem{remark}[theorem]{Remark}
\newtheorem*{acknowledgement*}{Acknowledgement}

\begin{document}

\maketitle

\begin{abstract}
  In this paper we are investigating variational homogeneous second order
  differential equations by considering the questions of how many different
  variational principles exist for a given spray.  We focus our attention on
  $h(2)$-variationality; that is, the regular Lagrange function is homogeneous
  of degree two in the directional argument.  Searching for geometric objects
  characterizing the degree of freedom of $h(2)$-variationality of a spray, we
  show that the holonomy distribution generated by the tangent direction to
  the parallel translations can be used to calculate it. As a working example,
  the class of isotropic sprays is considered.
\end{abstract}

\section{Introduction}

In 1960, W.~Ambrose et al.~\cite{sprays} introduced the notion of sprays to
give an intrinsic presentation of second order ordinary differential
equations. All sprays are associated with a second order system of ordinary
differential equations and conversely, a spray can be associated with a second
order system of ordinary differential equations (SODE). In the most
interesting cases the spray can be derived from a variational principle.  A
particular class of variational sprays is composed by the metrizable
sprays. These sprays or SODEs can be viewed as the geodesic equations of a
metric.  Several papers are devoted to the inverse problem of the calculus of
variations and in particular, to the metrizability problem (see for example
\cite{AT1992, D1941,  GrMz, STP2002} and \cite{Bucataru2, crampin, Krupka,
  Krupkova, Mz08, Szilasi} respectively).  In this paper we are considering a
different aspect of the problem which is motivated by the fact that there are
sprays for which
\begin{enumerate}[label=(\alph*)]
\item there is no regular Lagrange function, that is, the spray is not
  variational,
\item there is essentially a unique Lagrange function,
\item there are several different regular Lagrange functions.
\end{enumerate}
The questions of \emph{how many different Lagrange functions} can be
associated with a spray and \emph{how to determine} this number in terms of
geometric objects are very interesting because the answers can lead to a
better understanding of the structure.  In this paper we propose to
investigate the above questions by considering the Euler-Lagrange partial
differential system associated to sprays.  In Definition \ref{def:v} we
introduce the notion of \emph{variational freedom}, denoted by $\v$, which
shows how many different variational principle can be associated to the spray
or in other words, how many essentially different regular Lagrange functions
exist for a given spray.  In general a spray $S$ is non-variational and
therefore $\v=0$.  For most of the variational cases there is a unique
variational principle admitting $S$ as a solution, that is $\v=1$. It may also
happen that $\v>1$, that is, there exist $\v$ essentially different Lagrange
functions and $\v$ essentially different variational principle associated to
$S$.

It is particularly interesting when the Lagrange functions are homogeneous.
This is the case for many examples: in general reativity, Riemannian geometry,
Finslerian geometry, etc.  That motivates the problem to investigate the
freedom of $h(k)$-variationality, when the Lagrange function must be
$k$-homogeneous.  In this paper we show that in the regular case the
\emph{holonomy distribution} can be used to determine $\vh 2$.  In Section
\ref{sec:4} we give an explicit formula how $\vh 2$ can be calculated.  As a
working example we consider the class of isotropic sprays in Section
\ref{sec:5}.

\section{Preliminaries}

In this section, we give a brief introduction that will be needed
throughout. For more details, we refer to \cite{r21, GrMz}.  Let $M$ be an
$n$-dimensional manifold, $(TM,\pi_M,M)$ be its tangent bundle and $(\T
M,\pi,M)$ the subbundle of nonzero tangent vectors.  We denote by $(x^i) $
local coordinates on the base manifold $M$ and by $(x^i, y^i)$ the induced
coordinates on $TM$.  The vector $1$-form $J$ on $TM$ locally defined by $J =
\frac{\partial}{\partial y^i} \otimes dx^i$ is called the natural
almost-tangent structure of $T M$. The vertical vector field
$\C=y^i\frac{\partial}{\partial y^i}$ on $TM$ is called the canonical or
Liouville vector field.  Recall that $\C$ is the infinitesimal generator of
the one-parameter group of (positive) homoteties.  A vector $\ell$-form $L$ on
$TM$ is homogeneous of degree $r$ if $d_{\C}L=(r-1)L$.  A scalar $p$-form
$\omega$ on $TM$ is homogeneous of degree $r$ if
$\mathcal{L}_{\C}\omega=r\omega$. In particular a function $E\in
C^\infty(\TM)$ is k-homogeneous if
\begin{equation}
  \label{eq:5}
  \mathcal{L}_{\C}E =k  E.  
\end{equation}

\subsection*{Semi-sprays and sprays}

A vector field $S\in \mathfrak{X}(\T M)$ is called a semi-spray if $JS = \C$.
If in addition $[\C, S] = S$ then $S$ is called homogeneous semi-spray, or
simply a spray.  Locally, a semi-spray can be expressed as follows
\begin{equation}
  \label{eq:spray}
  S = y^i \frac{\partial}{\partial x^i} - 2G^i\frac{\partial}{\partial y^i}.
\end{equation}
where the functions $G^i=G^i(x,y)$ are the \emph{coefficients} of the
semi-spray.  If $S$ is a spray, then the \emph{coefficients} are homogeneous
functions of degree 2 in the $y=(y^1, \dots , y^n)$ variable.  A curve $c : I
\rightarrow M$ is called is called \emph{geodesic} of a semi-spray $S$ if $S
\circ c' = c''$. Locally, $c(t) = (x^i(t))$ is a geodesic of \eqref{eq:spray}
if and only if it satisfies the equation
\begin{equation}
  \label{eq:sode}
  \frac{d^2 x^i}{dt^2} +2G^i\Big(x ,\frac{dx}{dt}\Big)=0,
\end{equation}
therefore semi-sprays can be seen as the coordinate-free version of system of
second order differential equations.

\begin{definition}
  A \emph{regular Lagrange function} $E\colon TM \to \R$ is continuous, smooth
  on $\TM$, and the matrix field \vspace{-5pt}
  \begin{equation}
    \label{eq:1}
    g_{ij} = \frac{\partial^2 E}{\partial y^i\partial y^j}
  \end{equation}
  is regular on $\T M$.  The Lagrange function $E$ is called $k$-homogeneous
  if it is homogeneous of degree $k$ in the directional argument $y=(y^i)$.  A
  regular $2-$homogeneous Lagrange function $E$ is called \emph{Finsler energy
    function} if \eqref{eq:1} is positive definite.
\end{definition}

If $E$ is a regular Lagrange function, then the $2$-form $\Omega_E:=dd_JE$ is
non-degenerate and the Euler-Lagrange equation
\begin{equation}
  \label{eq:EL}
  i_S\Omega_E+d(\mathcal L_{\C}E-E)=0
\end{equation}
uniquely determines a semi-spray $S$.  This semi-spray is called the
\emph{geodesic semi-spray} of $E$.

\medskip 

Let us consider the \emph{inverse problem}: A semi-spray $S$ on a manifold $M$
is called \emph{variational} if there exists a variational principle, that is
a regular Lagrange function $E\colon TM \to \R$, such that the stationary
curves of the functional
\begin{displaymath}
  I(\gamma)=\int E(\gamma (t), \dot \gamma (t))dt
\end{displaymath}
are the geodesic curves of the semi-spray $S$.  It is particularly
interesting, when both the semi-spray and the regular Lagrange function are
homogeneous.  This is the case for many important examples: in general
relativity, Riemannian geometry, Finslerian geometry, etc.  That motivates the
following
\begin{definition}
  A given spray $S$ on a manifold $M$ is called $h(k)-$\emph{variational} ($k
  \in \mathbb N$) if there exists a $k$-homogeneous regular Lagrange function
  $E\colon TM \to \R$ whose geodesic spray coincide with $S$.  In particular
  $S$ is \emph{metrizable}, if there exists a Finsler energy function such
  that the associated geodesic spray is $S$.
\end{definition}
J.~Szenthe proved that in the analytical case if there exists a regular
associated Lagrangian for a spray $S$, then there exists a $2-$homogeneous
regular associated Lagrangian too. This result can be reformulated as
\begin{proposition}[Szenthe \cite{Szenthe}]
  \label{sec:szenthe}
  An analytical spray on an analytical manifold is variational if and only if
  it is $h(2)-$variational.
\end{proposition}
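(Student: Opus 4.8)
The implication ``$h(2)$-variational $\Rightarrow$ variational'' is immediate: a $2$-homogeneous regular Lagrange function is in particular a regular Lagrange function, so a spray admitting the former admits the latter. All the content is in the converse, so suppose $S$ is variational and let $E$ be a regular Lagrange function whose geodesic spray is $S$, i.e.\ $E$ solves \eqref{eq:EL}. By analyticity I would first replace $E$ by an analytic regular Lagrangian (this already uses that $S$ and $M$ are analytic) so that $E$ admits a homogeneous expansion $E=\sum_{k\ge 0}E_k$ with $\mathcal L_{\C}E_k=kE_k$.

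The useful structural observation is that the homogeneity of $S$ \emph{grades} the Euler--Lagrange operator. Since $S$ is a spray we have $[\C,S]=S$, i.e.\ $\mathcal L_{\C}S=S$; and for a $k$-homogeneous $E_k$ one checks that $d_JE_k$ is $(k-1)$-homogeneous, hence $\Omega_{E_k}=dd_JE_k$ is $(k-1)$-homogeneous, so that $i_S\Omega_{E_k}$ is $k$-homogeneous, while $d(\mathcal L_{\C}E_k-E_k)=(k-1)\,dE_k$ is $k$-homogeneous as well. Thus the left-hand side of \eqref{eq:EL} sends a $k$-homogeneous input to a $k$-homogeneous output. Feeding in $E=\sum_kE_k$ and matching homogeneous degrees, \eqref{eq:EL} splits into the independent equations
\begin{equation*}
  i_S\Omega_{E_k}+d(\mathcal L_{\C}E_k-E_k)=0,\qquad k\ge 0,
\end{equation*}
so \emph{each} homogeneous component $E_k$ is by itself a solution of the Euler--Lagrange equation of $S$. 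This reduces the problem to producing, out of these data, a $2$-homogeneous component that is moreover \emph{regular}.

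The hard part will be exactly the regularity. It is tempting to simply take the degree-$2$ component $E_2$, but this fails: the component $E_2$ may vanish, and even a nonzero homogeneous component may be degenerate, because nondegeneracy of the full Hessian $\partial^2 E/\partial y^i\partial y^j=\sum_k\partial^2 E_k/\partial y^i\partial y^j$ can be a property of the \emph{sum} of differently homogeneous (individually singular) blocks whose degeneracies cancel only jointly — as already happens when two blocks scale with reciprocal powers of the homothety parameter. (The degree $k=1$ is in any case excluded, since a $1$-homogeneous Lagrangian always has $y$ in the kernel of its Hessian.) Hence neither projection onto a single homogeneous degree nor averaging over the homothety flow $y\mapsto e^t y$ will in general preserve nondegeneracy, and the genuine task is an \emph{existence} statement: that a regular $2$-homogeneous solution of \eqref{eq:EL} exists at all.

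To establish this I would reformulate variationality of $S$ as the integrability of an analytic differential system for the variational data (the Helmholtz/multiplier form of \eqref{eq:EL}), for which the given $E$ guarantees a solution and hence compatibility. The homothety symmetry of a spray acts on this system, so one may restrict it to the $2$-homogeneous sector and invoke the analytic existence theory (Cauchy--Kovalevskaya, or Cartan--K\"ahler on the associated exterior differential system) to build an analytic solution from Cauchy data chosen to be $2$-homogeneous and, since nondegeneracy is an open condition, regular on a non-characteristic initial hypersurface. The main obstacle — and the place where analyticity rather than mere smoothness is indispensable — is verifying that imposing $2$-homogeneity does not over-determine the variational system (i.e.\ that the reduced system remains involutive) and that the nondegeneracy prescribed initially is propagated by the solution. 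This compatibility of the homogeneity constraint with the Euler--Lagrange system is where the bulk of Szenthe's argument is concentrated.
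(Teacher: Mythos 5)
You should first note a point of comparison that matters here: the paper contains \emph{no} proof of Proposition \ref{sec:szenthe}. It is imported as a black box from \cite{Szenthe} (and used as such, e.g.\ in part (a) of Proposition \ref{thm:isotropic}), so your attempt can only be measured against the classical argument, not against anything internal to the paper. The solid parts of your sketch are the trivial direction and the grading observation: since $[\C,S]=S$, the operator $E\mapsto\omega_E$ does send $k$-homogeneous functions to $k$-homogeneous $1$-forms, so the homogeneous components of a solution of \eqref{eq:EL} are again solutions; this is indeed how arguments of Szenthe's type begin. Your warning that one cannot just take the degree-$2$ component is also justified: for the flat spray on $\R^n$, $E=|y|^4$ is a regular Euler--Lagrange function whose degree-$2$ component vanishes identically.

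There are, however, two genuine gaps. (i) The step ``replace $E$ by an analytic regular Lagrangian'' is not one you are entitled to: variationality only provides a Lagrange function that is continuous on $TM$, smooth and regular on $\TM$, and upgrading it to an analytic one is a claim of essentially the same strength as the proposition itself. Worse, even a solution analytic on $\TM$ admits no expansion $E=\sum_k E_k$ into homogeneous components in general; such an expansion amounts to analyticity across the zero section, which regular Lagrangians (e.g.\ non-Riemannian Finsler energies) do not have. The correct substitute is the dilation flow: each $E_t=e^{-2t}E\circ\varphi_t$ with $\varphi_t(x,y)=(x,e^ty)$ is again a regular Euler--Lagrange function because $S$ is $\varphi_t$-invariant, and the homogeneous pieces must be extracted from this family. (ii) More seriously, the decisive step --- existence of a \emph{regular} $2$-homogeneous solution --- is delegated to Cauchy--Kovalevskaya or Cartan--K\"ahler ``once involutivity of the reduced system is checked''. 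But the Euler--Lagrange system of a spray is in general \emph{not} involutive or formally integrable (analyzing its prolongations and obstructions is the subject of the book \cite{GrMz}), and you give no argument that the $2$-homogeneous sector is involutive, nor that compatibility furnished by the inhomogeneous solution $E$ forces solvability, with regularity, in that sector. As written, the proposal reduces the theorem to an unproved assertion at least as strong as the theorem; and it is precisely this passage of Szenthe's published argument that has been critically scrutinized in the literature (see \cite{crampin}). Within the framework of this paper, the natural way to attack that step is via holonomy invariance: $2$-homogeneous Euler--Lagrange functions are exactly the $2$-homogeneous holonomy invariant functions (Lemma \ref{thm:elh_holh}), and holonomy invariance survives functional combinations (Proposition \ref{thm:func_combination}); so a nowhere-vanishing $k$-homogeneous component $E_k$ yields the $2$-homogeneous candidate $|E_k|^{2/k}\in\elh{2}$. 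What then remains --- and what your sketch leaves untouched --- is precisely the regularity of such a candidate.
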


To decide whether or not a spray $S$ can be derived from a variational
principle one has to consider the Euler-Lagrange partial differential equation
which is formally the same as \eqref{eq:el} but considered this time as a
differential system on the unknown Lagrange function $E$.
\begin{definition}
  The solutions of \eqref{eq:el} are called \emph{Euler-Lagrange functions} of
  the spray $S$. The set of Euler-Lagrange functions of $S$ will be denoted by
  $\el$. The subset of $k$-homogeneous Euler-Lagrange functions will be
  denoted by $\elh k$.
\end{definition}
Let $S$ be a spray on a manifold $M$.  The \textit{Euler-Lagrange form}
associated with $E$ is
\begin{math}
  \label{eq:omega}
  \omega_E  := i_S \Omega_E + d \mathcal{L}_C E -dE.
\end{math}
Then the Euler-Lagrange PDE equation \eqref{eq:EL} can be written as
\begin{equation}
  \label{eq:el}
  \omega_E=0.
\end{equation}
Taking the homogeneity condition \eqref{eq:5} into account we get that
\begin{equation}
  \label{eq:elh}
  \elh k=
  \left\{
    E  \in C^\infty(\TM) \ | \ \omega_E\=0, \ \mathcal L_\C E \=kE \,
  \right\}.
\end{equation}
To summarize the formalism we get the
\begin{property}
  \label{prop:remark_var}
  A semi-spray $S$ is variational if there exists a regular function
  $E \in \el$.  A spray $S$ is $h(k)$-variational if there exists a regular
  function $E \in \elh k$.
\end{property}

Several works are devoted to the inverse problem of the calculus of variations
(see for example \cite{AT1992, D1941, GrMz, STP2002} and the references
therein).  In this paper we are considering a different aspect of this
problem: \emph{How many variational principles} exist for a given spray and
how this number can be determined in terms of \emph{geometric objects}
associated to the spray?  To formulate the problem in a precise way we
introduce the following

\begin{definition} 
  \label{def:v}
  Let $S$ be a semi-spray. If $S$ is variational, then its \emph{variational
    freedom} is $\v (\in \mathbb N)$ where $\v={rank}\, (\el)$.  If $S$ is
  non-variational, then we set $\v=0$.
\end{definition}
We precise here that the notation $\v={rank}\, (\el)$ means that $\el$ can be
locally generated by its $\v$ functionally independent of its elements.  In
other words, if the variational freedom of $S$ is $\v \geq 1$ then for every
$v_0\in \TM$ there exists a neighbourhood $U\subset \TM$ and
$E_1,...,E_{\smallv}\in\el$ functionally independent on $U$ such that any
$E\in \el$ can be expressed as
\begin{displaymath}
  \hphantom{\qquad \forall \ v \in U.}
  E(v)=\varphi\big(E_1(v), \dots, E_{\smallv} (v)\big), \qquad \forall \ v \in U,
\end{displaymath}
with some function $\varphi\colon \R^{\smallv}\to \R$.

For (homogeneous) spray, we can consider the following analogous
\begin{definition}
  \label{def:vh}
  For a variational spray $S$ the $h(k)$-variational freedom is $\vh k$ if
  $\vh k={rank}\, (\elh k)$.  We set $\vh k=0$ if there is no regular element
  in $\elh k$.
\end{definition}
We remark, that a 1-homogeneous Lagrange function cannot be regular, therefore
for any sprays $S$ we have $\vh 1=0$. 

It is particularly interesting the $\vh 2={rank}\, (\elh 2)$ which is showing
how many different $2-$homogeneous Lagrange functions or variational
principles exist for the given spray.  As it was already mentioned, in many
applications (general relativity, in Riemannian and Finslerian geometry, etc)
the energy function must be 2-homogeneous. This is why in this paper we are
focusing our attention on this special case.

In Section \ref{sec:3_geometric_object} we introduce the most important
geometric objects (parallel translation, holonomy distribution, holonomy
invariant functions) needed to compute $\vh 2$ and in Section \ref{sec:4} we
determine $\vh 2$ in the case when the associated parallel translation is
regular.

\section{Holonomy invariant functions}
\label{sec:3_geometric_object}

\subsection*{Connection, parallel translation} \
\\
Every semi-spray $S$ induces an Ehresmann connection (see
\cite{Szilasi-book}).  The corresponding decomposition is $T\tm = H\tm \oplus
V\tm$, where $V\tm = \mathrm{Ker}\, \pi_*$ is the vertical and $H\tm$ is the
horizontal subbundle of $T\tm$ defined through the corresponding
\emph{horizontal and vertical projectors}.  Locally, the projectors associated
to \eqref{eq:spray} can be expressed as
\begin{math}
  h\=\frac{\delta}{\delta x^i}\o dx^i
\end{math}
and
\begin{math}
  v\=\frac{\partial}{\partial y^i}\o \delta y^i
\end{math}
where
\begin{math}
  \delta y^i\=dy^i\+N^i_j dx^j, 
\end{math}
\begin{equation}
  \label{eq:h_i}
  \frac{\delta}{\delta x^i}\=\frac{\partial}{\partial x^i}\n
  N^j_i\frac{\partial}{\partial y^j},
\end{equation}
and 
\begin{math}
  N^j_i\=\frac{\partial G^j}{\partial y^i}.
\end{math}
The modules of horizontal and vertical vector fields will be denoted by
$\mathfrak X^h(TM)$ and $\mathfrak X^v(TM)$ respectively.  

The \emph{parallel translation} of a vector along curves is defined through
horizontal lifts: Let $\gamma\colon [0,1]\to M$ be a curve such that
$\gamma(0)=p$ and $\gamma(1)=q$.  Let $\gamma^h(0)=v$, $\gamma^h(1)=w$ and
$\gamma^h$ be a horizontal lift of the curve $\gamma$, that is $\pi \circ
\gamma^h = \gamma$ and $\dot\gamma^h(t)\in H_{\gamma^h(t)}$. The parallel
translation $\tau\colon T_pM\to T_qM$ along $\gamma$ is defined as follows:
$\tau(v)=w$.

The curvature tensor $R = - \frac{1}{2}[h,h]$ of the nonlinear connection
satisfies
\begin{equation}
  \label{eq:R}
  R(X,Y)= -v[hX, hY],
\end{equation}
and characterizes the integrability of the horizontal distribution: $H\TM$ is
integrable if and only if the curvature is identically zero.  In a local
coordinate system, the curvature is given by $R\=R^i_{jk}dx^j\o dx^k
\o\frac{\partial}{\partial y^i}$ where
\begin{math}
  R^i_{jk} = \frac{\delta N^i_j}{\delta x^k} - \frac{\delta N^i_k}{\delta x^j}.
\end{math}

\medskip

\subsection*{Holonomy distribution}

\begin{definition}[\cite{Mz08}]
  The \emph{holonomy distribution} $\H$ of a spray $S$ is the distribution on
  $TM$ generated by the horizontal vector fields and their successive
  Lie-brackets, that is
  \begin{equation}
    \label{eq:14}
    \H:= \Bigl\langle \mathfrak X^h(TM)  \Bigr \rangle_{Lie}  \!
    =\Big\{[X_1,[\dots [X_{m-1},X_m]...]] \ \big| \ X_i 
    \in \mathfrak X^h(TM) \Big\}
  \end{equation}
\end{definition}
\begin{remark}
  \label{rem:H_H}
  The holonomy distribution $\H$ is the smallest involutive distribution
  containing the horizontal distribution. Using the horizontal and vertical
  projectors we have
  \begin{displaymath}
    \H = h (\H) \oplus v (\H) = H\TM \oplus v (\H).
  \end{displaymath}
  From \eqref{eq:R} we can get that the image of the curvature tensor is a
  subset of the vertical part of the holonomy distribution, that is
  $\mathrm{Im} \, R \subset v(\H)$, and we have $\H\= H\TM$ if and only if
  $R\equiv 0$.
\end{remark}
\begin{remark}
  \label{thm:N_v}
  When $\H$ is a regular distribution, then it is integrable.  Using the
  definition of parallel translation via horizontal lifts it is easy to see
  that the integral manifold through $v\in TM$, $\mathcal{O}_\tau(v)$, is the
  orbit of $v$ with respect to all possible parallel translations.  By
  Frobenius integrability theorem one can find a coordinate system $(U, z)$ of
  $\TM$ in a neighborhood of $v \in \TM$ such that the components of $\mathcal
  O_\tau\cap U$ are the sets
  \begin{equation}
    \label{eq:frob_coord_1}
    \{w\!\in\! U \,|\, z^i(w)\!=\!z_0^{i}, \, \dim \mathcal O_\tau\!+\!1
    \leq i  \leq  2n\}, \qquad |z_0^i|<\epsilon.
  \end{equation}
\end{remark}
\begin{definition}
  We say that the parallel translation is \emph{regular} if the distribution $\H$
  is regular and the orbits of the parallel translation are regular in the
  sense that and for any $v \in \TM$ there is a neighbourhood $U\subset \TM$
  such that any orbits $\mathcal{O}_\tau$ have at most one connected component
  in $U$.
\end{definition}
If the parallel translation is regular, then there exists a coordinate system
$(U, z)$ of $\TM$ in a neighborhood of any $v \in \TM$ such that in
\eqref{eq:frob_coord_1} different $z^i$ coordinates
($\dim \mathcal O_\tau\!+\!1 \leq i \leq 2n)$ correspond to different orbits
of the parallel translation.

\medskip

\subsection*{Holonomy invariant functions}

\begin{definition}
  Let $S$ be a spray. A function $E \in C^\infty(TM)$ is called \emph{holonomy
    invariant}, if it is invariant with respect to parallel translation, that
  is, for any $v\in TM$ and for any parallel translation $\tau$ we have
  $E(\tau(v))=E(v)$. The set of holonomy invariant functions will be denoted
  by $\hol$.
\end{definition}
In the case when the parallel translation is regular, the tangent spaces of
its orbits are given by the holonomy distribution $\H$, that is
$T_v\bigl(O_\tau(v)\bigr)=\H(v)$. Consequently, $E \in C^\infty(TM)$ is a
holonomy invariant function if and only if we have $\mathcal{L}_X E=0$, $X \in
\H$ that is
\begin{equation}
  \label{eq:hol}
  \hol =
  \left\{
    E \in C^\infty(\TM) \ | \ \mathcal{L}_X E=0, \ X \in \H
  \right\}.
\end{equation}
The subset of $k$-homogeneous holonomy invariant functions will be denoted by
$\holh k$. Using \eqref{eq:5} we get
\begin{equation}
  \label{eq:holh}
  \holh{k} =
  \left\{
    E \in \hol \ | \ \mathcal{L}_\C E=kE \,
  \right\}.
\end{equation}

\bigskip

\section{Euler-Lagrange functions and $h(2)$-variational freedom of sprays}
\label{sec:4}

\bigskip

We can observe the following
\begin{property}
  \label{thm:vect_space}
  The $\el$ and $\elh k$ ($k \in \mathbb N$) are vector spaces over $\R$.
\end{property}
\begin{proof}
  Both the Euler-Lagrange equation \eqref{eq:EL} and the homogeneity equation
  \eqref{eq:5} linear parial differential equations.  Therefore linear
  combination of their solutions with constant coefficients are also
  solutions.
\end{proof}
In particular, Property \ref{thm:vect_space} states that linear combination of
$2$-homogeneous Euler-Lagrange functions of $S$ are also $2$-homogeneous
Euler-Lagrange functions of $S$.  We can consider this combination as a
\emph{trivial combination} of Euler-Lagrange functions.  As the next
proposition shows, a much wider combination of homogeneous Euler-Lagrange
functions can produce new homogeneous Euler-Lagrange functions:

\begin{proposition}
  \label{thm:func_combination}
  A $1$-homogeneous functional combination of $2$-homogeneous Euler-Lagrange
  functions of a spray $S$ is also a $2$-homogeneous Euler-Lagrange functions
  of $S$.
\end{proposition}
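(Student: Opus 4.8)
The plan is to write the combination as $E=\varphi(E_1,\dots,E_m)$, where $E_1,\dots,E_m\in\elh 2$ and $\varphi\colon\R^m\to\R$ is (positively) homogeneous of degree one, and then to verify the two conditions \eqref{eq:elh} defining $\elh 2$: that $E$ is $2$-homogeneous and that $\omega_E=0$. The homogeneity is the easy half. Writing $\varphi_a$ for $\partial\varphi/\partial u_a$ evaluated at $(E_1,\dots,E_m)$, the chain rule gives $\mathcal L_\C E=\sum_a\varphi_a\,\mathcal L_\C E_a=2\sum_a\varphi_a E_a$, and since $\varphi$ is $1$-homogeneous, Euler's identity $\sum_a u_a\varphi_a=\varphi$ yields $\mathcal L_\C E=2\varphi=2E$. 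In particular $\mathcal L_\C E-E=E$, so for $E$ (and likewise for each $E_a$) the Euler--Lagrange form collapses to $\omega_E=i_S\Omega_E+dE$, and $\omega_{E_a}=0$ reads $i_S\Omega_{E_a}=-dE_a$.

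Next I would expand $\Omega_E=dd_JE$. Because $d_JE$ involves only the $y$-derivatives of $E$, the chain rule produces the Leibniz-type identity $d_JE=\sum_a\varphi_a\,d_JE_a$, and hence
\begin{equation*}
  \Omega_E=\sum_a\varphi_a\,\Omega_{E_a}+\sum_{a,b}\varphi_{ab}\,dE_b\wedge d_JE_a,
\end{equation*}
where $\varphi_{ab}=\partial^2\varphi/\partial u_a\partial u_b$ at $(E_1,\dots,E_m)$. Contracting with $S$ and using $i_S\Omega_{E_a}=-dE_a$, the first sum becomes $-\sum_a\varphi_a\,dE_a=-dE$, which cancels the $+dE$ in $\omega_E$. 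Thus everything reduces to showing that the contraction of the cross terms vanishes. For those I would use that $i_S$ is an antiderivation, together with $i_SdE_b=S(E_b)$ and $i_Sd_JE_a=\mathcal L_\C E_a=2E_a$ (the latter since $dx^i(S)=y^i$), obtaining
\begin{equation*}
  \omega_E=\sum_{a,b}\varphi_{ab}\big(S(E_b)\,d_JE_a-2E_a\,dE_b\big).
\end{equation*}

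Two independent cancellations then finish the argument. First, differentiating Euler's identity $\sum_a u_a\varphi_a=\varphi$ with respect to $u_b$ gives $\sum_a u_a\varphi_{ab}=0$, hence $\sum_a E_a\varphi_{ab}=0$, so the second group of terms vanishes. Second, contracting the known relation $\omega_{E_a}=i_S\Omega_{E_a}+dE_a=0$ with $S$ and using $i_S^2=0$ yields the conservation law $S(E_a)=i_SdE_a=0$; this kills the first group of terms. Therefore $\omega_E=0$, and combined with $\mathcal L_\C E=2E$ this places $E$ in $\elh 2$.

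The routine parts are the coordinate identity $i_Sd_JE_a=2E_a$ and the Leibniz expansion of $\Omega_E$; the conceptual crux is recognising the two cancellation mechanisms — the Hessian identity $\sum_a E_a\varphi_{ab}=0$ coming from $1$-homogeneity of $\varphi$, and the conservation of energy $S(E_a)=0$ coming from $i_S^2=0$ applied to the Euler--Lagrange equation. I expect the main obstacle to be organising the second-derivative cross terms so that these two facts can be applied cleanly; once the expansion of $\Omega_E$ is in hand, both groups of terms vanish for structural reasons, and no genuinely hard estimate is needed.
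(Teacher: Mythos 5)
Your proof is correct, but it follows a genuinely different route from the paper's. The paper proves the proposition as a near-immediate consequence of Lemma \ref{thm:elh_holh} (the identification $\elh 2 = \holh 2$, imported from \cite{Mz08}): since each $E_a$ is holonomy invariant, the chain rule gives $\mathcal L_X E = \sum_a \frac{\partial \varphi}{\partial z^a}\,\mathcal L_X E_a = 0$ for every $X \in \H$, so $E$ is again holonomy invariant, and $1$-homogeneity of $\varphi$ enters only to preserve $2$-homogeneity. In that argument the invariance part works for \emph{arbitrary} smooth functional combinations; the structural reason the statement holds is that $\holh 2$ is closed under functional combinations. You instead verify $\omega_E = 0$ directly from the definition $\omega_E = i_S\Omega_E + d\mathcal L_\C E - dE$, expanding $\Omega_E = \sum_a \varphi_a \Omega_{E_a} + \sum_{a,b}\varphi_{ab}\, dE_b \wedge d_J E_a$ and killing the cross terms by two clean mechanisms: the differentiated Euler identity $\sum_a E_a \varphi_{ab} = 0$, and the conservation law $S(E_a) = i_S dE_a = -i_S i_S \Omega_{E_a} = 0$. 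All the individual steps check out (including $i_S d_J E_a = dE_a(JS) = \mathcal L_\C E_a = 2E_a$ and the collapse $\omega_{E_a} = i_S\Omega_{E_a} + dE_a$ in the $2$-homogeneous case). What your approach buys is self-containedness: it does not rely on the nontrivial theorem of \cite{Mz08} underlying Lemma \ref{thm:elh_holh}, and it exposes the analytic mechanism (energy conservation plus homogeneity of the Hessian of $\varphi$). What it costs is that it uses second derivatives $\varphi_{ab}$, so it is tied to the specific algebraic form of the Euler--Lagrange operator, whereas the paper's argument makes transparent why the holonomy-invariance viewpoint — which is the engine of the rest of the paper, in particular of Theorem \ref{thm:compute_m} — trivializes this proposition.
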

To prove the proposition we will use  the following  
\begin{lemma} 
  \label{thm:elh_holh}
  A 2-homogeneous Lagrangian is an Euler-Lagrange function of a spray $S$ if
  and only if it is a holonomy invariant function. Using the notation
  \eqref{eq:elh} and \eqref{eq:holh} we have
  \begin{equation}
    \label{eq:2}
    \elh 2 = \holh 2.
  \end{equation}
\end{lemma}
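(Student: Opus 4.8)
The plan is to show that, for a $2$-homogeneous $E$, both membership in $\elh 2$ and membership in $\holh 2$ are equivalent to the single horizontal condition $\frac{\delta E}{\delta x^i}=0$ for all $i$, i.e. $d_hE=0$. First I would dispose of the holonomy side. Since the horizontal fields $\frac{\delta}{\delta x^i}$ lie in $\H$, a holonomy invariant $E$ satisfies $\mathcal{L}_{\frac{\delta}{\delta x^i}}E=\frac{\delta E}{\delta x^i}=0$. Conversely, if $d_hE=0$ then, because $\H$ is generated by $\mathfrak X^h(TM)$ together with its iterated Lie brackets and $\mathcal{L}_{[X,Y]}E=\mathcal{L}_X\mathcal{L}_YE-\mathcal{L}_Y\mathcal{L}_XE$, a trivial induction gives $\mathcal{L}_ZE=0$ for every $Z\in\H$. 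Hence, by \eqref{eq:holh},
\[
  \holh 2=\Big\{E\ \big|\ \mathcal{L}_\C E=2E,\ \tfrac{\delta E}{\delta x^i}=0\ (1\le i\le n)\Big\}.
\]
So it remains to prove that for $2$-homogeneous $E$ the equation $\omega_E=0$ is equivalent to $d_hE=0$.

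The heart of the argument is a coordinate computation of $\omega_E$ in the adapted coframe $\{dx^i,\delta y^i\}$. Using $i_Sd_JE=\mathcal{L}_\C E$ and Cartan's formula one has $\omega_E=\mathcal{L}_Sd_JE-dE$, which for $2$-homogeneous $E$ reduces to $\omega_E=i_S\Omega_E+dE$. Writing $f_i:=\frac{\delta E}{\delta x^i}$ and $A_{ij}:=\frac{\delta}{\delta x^i}\frac{\partial E}{\partial y^j}$, and using $S=y^i\frac{\delta}{\delta x^i}$ together with $\Omega_E=dd_JE=A_{ij}\,dx^i\wedge dx^j+g_{ij}\,\delta y^j\wedge dx^i$, I would first check that the vertical part of $\omega_E$ vanishes automatically: the coefficient of $\delta y^j$ is $-g_{ij}y^i+\frac{\partial E}{\partial y^j}$, which is zero by Euler's relation $g_{ij}y^i=\frac{\partial}{\partial y^j}(\mathcal{L}_\C E)-\frac{\partial E}{\partial y^j}=\frac{\partial E}{\partial y^j}$. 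Thus $\omega_E$ is semibasic, $\omega_E=\big(f_i+y^j(A_{ji}-A_{ij})\big)dx^i$. Next I would feed in the commutator $[\frac{\delta}{\delta x^i},\frac{\partial}{\partial y^j}]=\frac{\partial N^k_i}{\partial y^j}\frac{\partial}{\partial y^k}$, which yields $A_{ij}=\frac{\partial f_i}{\partial y^j}+\frac{\partial N^k_i}{\partial y^j}\frac{\partial E}{\partial y^k}$, and the homogeneity identities $y^j\frac{\partial N^k_i}{\partial y^j}=N^k_i=y^j\frac{\partial N^k_j}{\partial y^i}$ (both from $2$-homogeneity of $G^k$). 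After contracting, the $N$-terms cancel and I obtain the clean formula
\[
  \omega_E=\Big(y^j\frac{\partial f_j}{\partial y^i}-f_i\Big)dx^i=\Big(\frac{\partial(SE)}{\partial y^i}-2f_i\Big)dx^i .
\]

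With this identity both implications are short. If $d_hE=0$ then every $f_i\equiv0$ and $\omega_E=0$, so $\holh 2\subseteq\elh 2$. For the converse, $\omega_E=0$ gives $2f_i=\frac{\partial(SE)}{\partial y^i}$; contracting with $y^i$ and using that $[\C,S]=S$ forces $SE$ to be $3$-homogeneous, so $y^i\frac{\partial(SE)}{\partial y^i}=3\,SE$ while $2y^if_i=2\,SE$, we get $2\,SE=3\,SE$, hence the conserved energy $SE=0$. Then $f_i=\tfrac12\frac{\partial(SE)}{\partial y^i}=0$, i.e. $d_hE=0$, giving $\elh 2\subseteq\holh 2$.

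I expect the only real obstacle to be the bookkeeping in the middle paragraph: getting the adapted-frame expression of $\Omega_E$ right, correctly handling the connection commutators, and verifying the two homogeneity identities for $N^k_j$ that make the curvature-type terms $y^j(A_{ji}-A_{ij})$ collapse. Everything after that is formal. It is worth stressing that the intermediate fact $SE=0$ is exactly conservation of energy along $S$ (here the energy $\mathcal{L}_\C E-E$ equals $E$), which is the conceptual reason the converse inclusion holds.
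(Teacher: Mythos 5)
Your proof is correct, and it takes a genuinely different route from the paper's. The paper disposes of the lemma essentially by citation: it invokes Theorem~1 of \cite{Mz08} (p.~86), which already states that a $2$-homogeneous Lagrangian solves the Euler--Lagrange PDE if and only if $d_\h E=0$ for a projection $\h$ onto the holonomy distribution $\H$, and then merely observes that $d_\h E=0$ is the same as holonomy invariance in the sense of \eqref{eq:hol}. You instead prove the whole statement from scratch, in two steps: first you reduce invariance under all of $\H$ to the purely horizontal condition $\delta E/\delta x^i=0$ by bracket induction (legitimate, since $\mathcal L_{[X,Z]}E=X(ZE)-Z(XE)$, the Lie derivative of a function is tensorial in the vector field, and $\H$ is spanned by iterated brackets of horizontal fields); second you show that $\omega_E=0$ is equivalent to this horizontal condition for $2$-homogeneous $E$ by an adapted-coframe computation. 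I checked that computation: the vertical component of $\omega_E$ vanishes by Euler's identity $g_{ij}y^i=\partial E/\partial y^j$; the connection terms cancel in $y^j(A_{ji}-A_{ij})$ because $y^j\partial N^k_j/\partial y^i=N^k_i=y^j\partial N^k_i/\partial y^j$ (both from $2$-homogeneity of $G^k$) and because $f_i=\delta E/\delta x^i$ is $2$-homogeneous; the resulting formula $\omega_E=\bigl(y^j\partial f_j/\partial y^i-f_i\bigr)\,dx^i$ is right, as is the converse step in which contraction with $y^i$ and the $3$-homogeneity of $SE$ (from $[\C,S]=S$) give $3SE=2SE$, hence $SE=0$ and then $f_i=0$. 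What the two approaches buy: the paper's proof is two lines but rests entirely on an external result, whereas yours is self-contained---in effect you reprove the relevant part of the theorem cited from \cite{Mz08}---and it exposes the mechanism clearly: homogeneity forces the conservation law $SE=0$, and that is exactly what collapses the Euler--Lagrange system to horizontal invariance.
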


\begin{proof}
  Let $\h \colon TTM \to \H$ be an arbitrary projection on $\H$. In
  \cite[p.~86, Theorem 1.]{Mz08} it was proven that a $2$-homogeneous Lagrange
  functions $E \colon \TM \to \R$ is a solution of the Euler-Lagrange PDE if
  and only if it satisfies the equation
  \begin{equation}
    \label{eq:dh_E}
    d_\h E=0,
  \end{equation}
  where the $d_\h$ operator is defined by the formula
  \begin{math}
    d_\h E (X) = \h X (E)=\mathcal L_{\h X}E.
  \end{math}
  Consequently \eqref{eq:dh_E} is satisfied if and only if $E$ is a holonomy
  invariant function.
\end{proof}

\bigskip

\begin{proof}[Proof of Proposition \ref{thm:func_combination}]
  Let $\varphi=\varphi(z_1,\dots, z_r)$ be a smooth 1-homogeneous function and
  consider the functional combination 
  \begin{equation}
    \label{eq:3}
    E:=\varphi\big(E_1, \dots , E_r\big).
  \end{equation}
  of $E_1, \dots , E_r \in \elh 2$, that is, $2$-homogeneous Euler-Lagrange
  functions of a spray $S$.  To prove the theorem we have to show that $E$ is
  also a $2-$homogeneous Euler-Lagrange function of $S$. Then, because of the
  $1-$homogeneity of $\varphi$ and the $2-$homogeneity of $E_i$,
  $i=1, \dots,r$, we have
  \begin{displaymath}
    E(x,\lambda y) = \varphi(E_1(x,\lambda y), ..., E_r(x,\lambda y))=
    \varphi(\lambda^2 E_1(x, y), ..., \lambda^2 E_r(x, y)) =
    \lambda^2 E(x, y),
  \end{displaymath}
  hence $E$ is $2-$homogeneous.  Moreover, using \eqref{eq:2} we have
  $E_i\in \holh 2$ and from \eqref{eq:hol} we get $\mathcal L_X E_i=0$ for any
  vector field $X\in \H$ in the holonomy distribution. Consequently, for
  $X\in \H$ we have
  \begin{displaymath} 
    \mathcal L_X E =  \frac{\partial \varphi}{\partial z^1} \! \cdot \! 
    \mathcal L_X E_1 + \dots + \frac{\partial \varphi}{\partial z^r} 
    \! \cdot \! \mathcal L_X E_r=0,\qquad 
  \end{displaymath}
  which shows that $E\in \holh 2$ and from \eqref{eq:2} we get $E \in \elh 2$.
\end{proof}
Proposition \ref{thm:func_combination} shows that functional combination of
Euler-Lagrange functions for the spray can generate new Euler-Lagrange
functions, hence new variational principles for a spray.  The variational
freedom introduced in Definition \ref{def:v} tells us how many essentially
different variational principles exist for a given spray.  The following
Theorem can be used to determine the $h(2)$-variational freedom in terms of
geometric quantities associated to the spray.

\medskip

\begin{theorem}
  \label{thm:compute_m}
  Let $S$ be a metrizable spray such that the parallel translation with
  respect to the associated connection is regular. Then
  \begin{equation}
    \label{eq:thm_vh_2}
    \vh 2  =  \mathrm{codim} \, \H.
  \end{equation}
\end{theorem}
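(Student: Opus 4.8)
The plan is to reduce the statement to a Frobenius count on the leaf space of the holonomy distribution, anchored by a fixed metrizing energy. By Lemma \ref{thm:elh_holh} we have $\elh 2 = \holh 2$, so it suffices to prove $\mathrm{rank}(\holh 2) = \mathrm{codim}\,\H$; since metrizability provides a regular element, this rank equals $\vh 2$ by Definition \ref{def:vh}. Metrizability supplies a Finsler energy $E_0 \in \elh 2 = \holh 2$, and being $2$-homogeneous with positive-definite Hessian it satisfies $E_0 = \tfrac12\,\frac{\partial^2 E_0}{\partial y^i\partial y^j}\,y^i y^j > 0$ on $\TM$ by Euler's theorem, hence is nowhere vanishing. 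Throughout I write $c := \mathrm{codim}\,\H = 2n - \dim\H$, and I use regularity of the parallel translation to identify, via the Frobenius theorem, holonomy invariant functions with functions on the (local) leaf space of $\H$.

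First I would record how the Liouville field interacts with $\H$. A direct computation from \eqref{eq:h_i}, using that the coefficients $N^j_i$ are $1$-homogeneous in $y$ (as $G^i$ is $2$-homogeneous), gives $[\C,\tfrac{\delta}{\delta x^i}]=0$; thus $\mathrm{ad}_\C$ maps horizontal fields to horizontal fields, and applying the Jacobi identity to the iterated brackets in \eqref{eq:14} yields $[\C,X]\in\H$ for every $X\in\H$. Next, $\C\notin\H$: were $\C$ a section of $\H$, every $E\in\hol$ would satisfy $\mathcal L_\C E=0$, contradicting $\mathcal L_\C E_0 = 2E_0 \neq 0$. Consequently $\HC := \H \oplus \langle\C\rangle$ is a regular distribution of rank $\dim\H+1$, and it is involutive because $\H$ is involutive, $[\C,\C]=0$, and $[\C,\Gamma\H]\subset\Gamma\H$.

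The central step is a division trick. For $E\in\holh 2$ the quotient $f:=E/E_0$ is smooth (as $E_0$ is nowhere zero), is holonomy invariant since $\mathcal L_X f = (E_0\,\mathcal L_X E - E\,\mathcal L_X E_0)/E_0^2 = 0$ for $X\in\H$, and is $0$-homogeneous because $E$ and $E_0$ are both $2$-homogeneous; hence $f\in\holh 0$. Conversely $f\mapsto fE_0$ maps $\holh 0$ into $\holh 2$, so $E\mapsto E/E_0$ is a bijection $\holh 2\to\holh 0$. Now $\holh 0 = \{\,f : \mathcal L_X f=0,\ X\in\H,\ \mathcal L_\C f=0\,\}$ is exactly the algebra of functions annihilated by the regular involutive distribution $\HC$, so by the Frobenius theorem $\mathrm{rank}(\holh 0) = \mathrm{codim}\,\HC = 2n-(\dim\H+1) = c-1$. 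I would then fix functionally independent local generators $g_1,\dots,g_{c-1}$ of $\holh 0$.

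Finally I would check that $E_0, g_1E_0, \dots, g_{c-1}E_0$ are $c$ functionally independent elements of $\holh 2$ that generate it, giving $\vh 2 = \mathrm{rank}(\holh 2)=c$. Functional independence holds because, on $\{E_0>0\}$, the map $(E_0, g_1E_0, \dots, g_{c-1}E_0)$ is the composition of the submersion $(E_0, g_1, \dots, g_{c-1})$ — of rank $c$ since $E_0$ varies along the $\C$-direction while the $g_i$ are $\C$-invariant — with the diffeomorphism $(t, s_1, \dots, s_{c-1})\mapsto(t, s_1 t, \dots, s_{c-1}t)$ of $\{t>0\}$. They generate because any $E\in\holh 2$ equals $fE_0$ with $f=\psi(g_1,\dots,g_{c-1})\in\holh 0$, whence $E=\psi\!\big(\tfrac{g_1E_0}{E_0},\dots,\tfrac{g_{c-1}E_0}{E_0}\big)E_0$ is a function of $E_0, g_1E_0, \dots, g_{c-1}E_0$. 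The step I expect to be most delicate is precisely this last bookkeeping: keeping both the independence and the generation arguments honest through the multiplicative reparametrisation, where the nowhere-vanishing of $E_0$ — guaranteed by positive-definiteness, hence by \emph{metrizability} and not merely by $h(2)$-variationality — is exactly what makes the division and the change of variables valid on a full neighbourhood.
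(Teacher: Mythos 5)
Your overall strategy coincides with the paper's own proof: both arguments anchor on a metrizing energy $E_0$, use the identification $\elh 2 = \holh 2$ (Lemma \ref{thm:elh_holh}), pass to $0$-homogeneous invariants by the division trick $E \mapsto E/E_0$ (the paper's Lemma \ref{lemma:theta}), identify $\holh 0$ with the functions annihilated by the involutive distribution $\HC = \H \oplus \mathrm{Span}\{\C\}$ (the paper's Lemmas \ref{lemma:H_C} and \ref{thm:H+C}, which you re-prove correctly inline), and finally multiply back by $E_0$; your submersion-composed-with-diffeomorphism argument for functional independence is a clean substitute for the paper's wedge-product computation.

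However, there is a genuine gap at the step ``$\mathrm{rank}(\holh 0) = \mathrm{codim}\,\HC$ by the Frobenius theorem,'' and it is precisely the point where the one hypothesis you never use --- the regularity of the \emph{parallel translation} --- must enter. Frobenius produces transverse coordinates $z^{\kappa+2},\dots,z^{2n}$ defined only on a chart $U$; but by Definitions \ref{def:v} and \ref{def:vh} the rank must be realized by generators that are \emph{elements of} $\holh 0$, i.e.\ globally defined functions on $\TM$ invariant under all parallel translations. A function on $U$ annihilated by $\HC$ need not be the restriction of any such global invariant: an orbit of the parallel translation may return to $U$ in several plaques, or the orbits may even be dense (as for an irrational linear foliation of a torus, where the only global invariants are the constants despite positive codimension), in which case the local Frobenius count strictly overestimates the rank. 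Consequently your $g_1,\dots,g_{c-1}$, and with them the products $g_iE_0$, are merely local functions, not elements of $\holh 0$ or $\elh 2$, and the equality $\vh 2 = c$ does not follow. The paper closes exactly this gap: it invokes the assumption that every orbit has at most one connected component in $U$ (this is the content of ``regular parallel translation'' beyond constant rank of $\H$) and replaces the raw coordinates by bump-function modifications $\theta_i = \psi^i z^i$ supported in $U$, which do define global elements of $\holh 0$ that restrict to the transverse coordinates near $v_0$. That local-to-global passage is the missing piece of your argument; your closing diagnosis that the delicate point is the final multiplicative bookkeeping is therefore misplaced --- that part of your proof is fine.
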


\medskip

To prove the above theorem we need the following lemmas:

\begin{lemma}
  \label{lemma:theta}
  Let $S$ be a spray and $E_o\in \elh 2$ nonzero on $\TM$. Then $E$ is a
  $2-$homogeneous Euler-Lagrange function of $S$ if and only if
  $\theta:=E/E_o$ is a $0-$homogeneous holonomy invariant function:
  \begin{displaymath}
    E \in \elh    2 \quad \Longleftrightarrow  \quad \theta=E/E_o\in \holh 0
  \end{displaymath}

\end{lemma}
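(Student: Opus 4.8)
The plan is to prove the equivalence by combining the two characterizations already established: Lemma \ref{thm:elh_holh} identifies $\elh 2$ with $\holh 2$ (the $2$-homogeneous holonomy invariant functions), and the holonomy invariant functions are exactly those annihilated by every $X \in \H$ via \eqref{eq:hol}. Since $E_o \in \elh 2 = \holh 2$ is assumed nonzero on $\TM$, the quotient $\theta := E/E_o$ is a well-defined smooth function on $\TM$, so the statement reduces to showing that the $2$-homogeneity plus holonomy invariance of $E$ is equivalent to the $0$-homogeneity plus holonomy invariance of $\theta$.

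First I would handle the homogeneity bookkeeping. Using \eqref{eq:5}, $E_o$ satisfies $\mathcal L_\C E_o = 2 E_o$. For the quotient $\theta = E/E_o$, the Liouville derivative obeys the quotient rule
\begin{displaymath}
  \mathcal L_\C \theta = \frac{(\mathcal L_\C E)\,E_o - E\,(\mathcal L_\C E_o)}{E_o^2}.
\end{displaymath}
If $E$ is $2$-homogeneous, substituting $\mathcal L_\C E = 2E$ and $\mathcal L_\C E_o = 2 E_o$ gives $\mathcal L_\C \theta = 0$, so $\theta \in \holh 0$ once invariance is checked; conversely, if $\mathcal L_\C \theta = 0$ then $E = \theta E_o$ has $\mathcal L_\C E = \theta \,\mathcal L_\C E_o = 2E$, so $E$ is $2$-homogeneous. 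This is the purely algebraic half and carries no obstacle.

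Next I would treat holonomy invariance. For any $X \in \H$, the same quotient rule gives
\begin{displaymath}
  \mathcal L_X \theta = \frac{(\mathcal L_X E)\,E_o - E\,(\mathcal L_X E_o)}{E_o^2}.
\end{displaymath}
Since $E_o \in \holh 2$, we have $\mathcal L_X E_o = 0$ for all $X \in \H$ by \eqref{eq:hol}, so this collapses to $\mathcal L_X \theta = (\mathcal L_X E)/E_o$. Because $E_o \neq 0$ on $\TM$, it follows immediately that $\mathcal L_X E = 0$ for all $X \in \H$ if and only if $\mathcal L_X \theta = 0$ for all $X \in \H$. Combining this with the homogeneity computation: $E \in \elh 2 = \holh 2$ means $\mathcal L_\C E = 2E$ and $\mathcal L_X E = 0$ for $X \in \H$, which is exactly equivalent to $\mathcal L_\C \theta = 0$ and $\mathcal L_X \theta = 0$ for $X \in \H$, i.e. $\theta \in \holh 0$.

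I do not expect a serious obstacle here, since both directions are formal consequences of the Leibniz rule for $\mathcal L_X$ together with $E_o$ being a nonvanishing $2$-homogeneous holonomy invariant function. The one point requiring care is the nonvanishing hypothesis on $E_o$: it is used both to guarantee $\theta$ is well-defined and smooth on all of $\TM$ and to divide by $E_o$ when deducing $\mathcal L_X E = 0$ from $\mathcal L_X \theta = 0$. I would make sure to invoke the identification $\elh 2 = \holh 2$ from Lemma \ref{thm:elh_holh} explicitly at both the start (to get $\mathcal L_X E_o = 0$) and the end (to convert the holonomy-invariance conclusion for $E$ back into membership in $\elh 2$).
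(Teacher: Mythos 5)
Your proposal is correct and follows essentially the same route as the paper: both invoke Lemma \ref{thm:elh_holh} to identify $\elh 2$ with $\holh 2$ and then reduce the statement to the algebraic fact that quotients and products by the nonvanishing $2$-homogeneous holonomy invariant $E_o$ exchange $2$-homogeneous and $0$-homogeneous holonomy invariant functions. The only difference is that you spell out the Leibniz/quotient-rule computations for $\mathcal L_\C$ and $\mathcal L_X$ that the paper leaves implicit.
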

\begin{proof}
  Using Lemma \ref{thm:elh_holh} we obtain that both $E$ and $E_o$ are
  2-homogeneous holonomy invariant functions. Thus, $\theta:=E/E_o$ is a
  0-homogeneous holonomy invariant function, that is, $\theta\in \holh 0$.
  Conversely, assume that $\theta=E/E_o\in \holh 0$. Then $E=\theta E_o$ is
  $2-$homogeneous holonomy invariant function. By Lemma \ref{thm:elh_holh},
  $E$ is an Euler-Lagrange function of the spray $S$.
\end{proof}

Let us consider the smallest involutive distribution contained $\H$ and the
Liouville vector field $\C$:
\begin{displaymath}
  \HC:=\big\langle \H, \C\big\rangle_{Lie}.
\end{displaymath}
We have the following

\begin{lemma}
  \label{lemma:H_C}
  If $S$ is a spray, then $\HC$ is linearly generated by $\H$ and $\C$, that
  is, 
  \begin{equation}
    \label{eq:HC}
    \HC=Span\{\H, \C\}.
  \end{equation}
\end{lemma}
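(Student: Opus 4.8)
The plan is to show that the $C^\infty$-module $\mathrm{Span}\{\H,\C\}$ is \emph{already} involutive. Since it obviously contains both $\H$ and $\C$, and $\HC$ is by definition the smallest involutive distribution with this property, the two must then coincide: involutivity of $\mathrm{Span}\{\H,\C\}$ gives the inclusion $\HC\subseteq\mathrm{Span}\{\H,\C\}$, while the reverse inclusion $\mathrm{Span}\{\H,\C\}\subseteq\HC$ is immediate, as any distribution containing $\H$ and $\C$ contains all their $C^\infty$-linear combinations.

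The crux is therefore to verify closure under the Lie bracket of sections of the form $X\+f\C$ with $X\in\H$ and $f\in C^\infty(\TM)$. Expanding $[X\+f\C,\,Y\+g\C]$ for $Y\in\H$, $g\in C^\infty(\TM)$, the four resulting terms are: $[X,Y]$, which lies in $\H$ because $\H$ is involutive (Remark \ref{rem:H_H}); the term $[f\C,g\C]=(f\C(g)\n g\C(f))\,\C$, which is a multiple of $\C$; and the two mixed terms $[X,g\C]=X(g)\,\C\n g[\C,X]$ and $[f\C,Y]=f[\C,Y]\n Y(f)\,\C$. All four land in $\mathrm{Span}\{\H,\C\}$ provided one establishes the key fact that $\C$ normalizes $\H$, i.e. that $[\C,X]\in\H$ for every $X\in\H$.

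To prove $[\C,\H]\subseteq\H$, I would first bracket $\C$ with the horizontal frame. Using the local expression \eqref{eq:h_i} together with the homogeneity of the spray — which makes $N^j_i=\partial G^j/\partial y^i$ homogeneous of degree one in $y$, so that $\C(N^j_i)=N^j_i$ by Euler's relation — a direct calculation yields $[\C,\frac{\delta}{\delta x^i}]=0$. Hence for an arbitrary horizontal field $X=X^i\frac{\delta}{\delta x^i}$ one gets $[\C,X]=\C(X^i)\frac{\delta}{\delta x^i}$, which is again horizontal. This then extends to all of $\H$ by induction on bracket length: writing a generator as $[Z_1,Z_2]$ with $Z_1,Z_2$ of shorter length, the Jacobi identity gives $[\C,[Z_1,Z_2]]=[[\C,Z_1],Z_2]\+[Z_1,[\C,Z_2]]$, and by the inductive hypothesis $[\C,Z_1],[\C,Z_2]\in\H$, so both terms lie in $\H$ by involutivity; $C^\infty$-linearity handles arbitrary sections.

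I expect the homogeneity identity $[\C,\frac{\delta}{\delta x^i}]=0$ to be the only real content of the argument — this is exactly where the hypothesis that $S$ is a genuine (homogeneous) spray, and not merely a semi-spray, is used in an essential way, via the degree-one homogeneity of the $N^j_i$. Everything else is formal: the Jacobi-identity induction and the bilinear expansion of the bracket are routine once this identity and the involutivity of $\H$ are in hand.
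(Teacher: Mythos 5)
Your proof is correct and follows essentially the same route as the paper's: both reduce the statement to the involutivity of $Span\{\H,\C\}$, expand the bracket into four terms, and handle the mixed terms via the identity $[\C,\frac{\delta}{\delta x^i}]=0$ (a consequence of the $2$-homogeneity of the spray coefficients), extended to iterated brackets by the Jacobi identity. Your organization is marginally tidier in that isolating the normalization property $[\C,\H]\subseteq\H$ lets you avoid the paper's preliminary case split on whether $\C\in\H$, but the mathematical content is the same.
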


\begin{proof}
  If $\C\in \H$ then $\HC=\H$ and \eqref{eq:HC} is true.  Let us consider the
  case when $\C \not \in \H$.  Take $X,Y\in \HC$.  Using the decomposition
  $X\!=\!X_{\H}\!+\!X_{\C}$ and $Y\!=\!Y_{\H}\!+\! Y_{\C}$ corresponding to
  the directions $\H$ and $\C$ we get
  \begin{equation}
   \label{eq:13}
   [X, Y]= [X_{\H}, Y_{\H}]+ [X_{\C}, Y_{\C}] + [X_{\C}, Y_{\H}]+ [X_{\H}, Y_{\C}].
  \end{equation}
  We have $[X_{\C}, Y_{\C}]\in Span\{\C\}$ and because of the involutivity of
  $\H$ we have also $[X_{\H}, Y_{\H}]\in \H$.  Let us consider a local basis
  \begin{math}
    \mathcal B=\big \{\frac{\delta}{\delta x^1}, \dots , \frac{\delta}{\delta
      x^n}\big\}
  \end{math}
  of the horizontal space $H\TM$ introduced in \eqref{eq:h_i}. Then the
  holonomy distribution $\H$ can be generated locally by the elements of
  $\mathcal B$ and by their successive Lie brackets.  Since the spray
  coefficients $G^i(x,y)$ introduced in \eqref{eq:spray} are 2-homogeneous in
  the $y$-variables, we have $[\C,\frac{\delta}{\delta x^i}]=0$. By the Jacobi
  identity, this is also true for the successive brackets of the
  $\frac{\delta}{\delta x^i}$'s.  Now, $Y_{\H} \in \H$ can be written as a
  linear combination of the elements $Y_{\H}=g^\alpha Y_\alpha$, where
  $Y_\alpha\in \H$ can be obtained by successive brackets of the
  $\frac{\delta}{\delta x^i}$'s, and therefore $[\C, Y_\alpha]=0$.  Hence, for
  the $\C$-directional component of $X$ we have $X_{\C}=X^c\C$ with $X^c\in
  C^\infty(\TM)$ and
  \begin{displaymath}
    [X_{\C},Y_{\H}]=[X^c\C, g^\alpha Y_\alpha]= (X_{\C} g^\alpha) Y_\alpha -(Y_{\H}X^c)\C
    + X^cg^\alpha[\C, Y_\alpha] = (X_{\C} g^\alpha)Y_\alpha -(Y_{\H}X^c)\C
  \end{displaymath}
  which is an element of $Span\{\H,\C\}$. The same argument is valid for the
  fourth term in (\ref{eq:13}).
\end{proof}

\bigskip

\begin{lemma}
  \label{thm:H+C}
  If the spray $S$ is metrizable then $\C$ is transverse to $\H$ on $\TM$,
  that is
  \begin{equation}
    \label{eq:H+C}
    Span\{\H, \C\}=\H \oplus Span\{\C\}.
  \end{equation}
\end{lemma}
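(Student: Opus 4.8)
The plan is to establish the transversality by proving that $\C$ is nowhere tangent to $\H$, i.e.\ that $\C(v)\notin\H(v)$ for every $v\in\TM$. Once this is known, the pointwise sum $Span\{\H,\C\}=\H+Span\{\C\}$ is automatically direct (the only way it could fail to be direct is $\C(v)\in\H(v)$ at some point), which is exactly \eqref{eq:H+C}. The decisive ingredient will be that metrizability, unlike mere variationality, supplies a \emph{positive definite} $2$-homogeneous Euler--Lagrange function, and positive definiteness forces that function to be strictly positive on $\TM$.

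First I would fix a Finsler energy function $E$ whose geodesic spray is $S$; such an $E$ exists precisely by the definition of metrizability. By Lemma \ref{thm:elh_holh} we have $E\in\elh 2=\holh 2$, so $E$ is a holonomy invariant function, and by \eqref{eq:hol} this means $\mathcal L_X E=0$ for every $X\in\H$.

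The heart of the argument is then a pointwise contradiction. Suppose $\C(v)\in\H(v)$ for some $v\in\TM$. Since $\mathcal L_X E=dE(X)$ is $C^\infty(\TM)$-linear in $X$, its value at $v$ depends only on $X(v)$; choosing a local section $\tilde X$ of $\H$ with $\tilde X(v)=\C(v)$ gives $(\mathcal L_\C E)(v)=(\mathcal L_{\tilde X}E)(v)=0$. On the other hand the $2$-homogeneity of $E$ (equation \eqref{eq:5} with $k=2$) yields $\mathcal L_\C E=2E$, whence $E(v)=0$. But by Euler's identity a $2$-homogeneous $E$ satisfies $E=\tfrac12 g_{ij}y^iy^j$, and since the matrix $g_{ij}$ from \eqref{eq:1} is positive definite and $v\neq 0$ on $\TM$, we obtain $E(v)>0$, a contradiction. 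Hence $\C(v)\notin\H(v)$ for all $v$, and the sum in \eqref{eq:H+C} is direct.

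The only subtle point I expect is the passage from the global holonomy invariance $\mathcal L_X E=0$ (for $X\in\H$) to the pointwise conclusion $(\mathcal L_\C E)(v)=0$ under the assumption $\C(v)\in\H(v)$; this is resolved by the tensoriality of $dE$ in its argument, so no extension of $\C$ inside $\H$ along a neighbourhood is needed. Everything else follows from the strict positivity of a positive-definite homogeneous energy, and this is exactly where \emph{metrizability} is indispensable: without positive definiteness the Euler--Lagrange function could vanish in some direction and transversality could genuinely fail there.
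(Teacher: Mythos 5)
Your proof is correct and takes essentially the same route as the paper's: metrizability gives a Finsler energy function which is holonomy invariant by Lemma \ref{thm:elh_holh}, while $2$-homogeneity and positive definiteness force $\mathcal L_{\C}E = 2E > 0$ on $\TM$, so $\C$ cannot lie in $\H$ at any point and the sum is direct. You additionally spell out two details the paper leaves implicit — the tensoriality of $dE$ needed to pass from invariance under sections of $\H$ to the pointwise conclusion, and the identity $E=\tfrac12 g_{ij}y^iy^j$ giving strict positivity — and you correctly observe that the paper's closing appeal to Lemma \ref{lemma:H_C} is not needed for equation \eqref{eq:H+C} as stated.
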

\begin{proof}
  If $S$ is metrizable, then there exists a Finsler energy function $E_o\in
  \elh 2$ of $S$.  Because of Proposition \ref{thm:elh_holh} we have $E_o\in
  \holh 2$. On the other hand, by using the homogeneity property of $E_o$ we
  have
  \begin{math}
    \mathcal L_{\C_v}E=2E(v) > 0
  \end{math}
  at any point $v\in \TM$.  But the derivatives of $E_o$ with respect to the
  elements of $\H$ is zero. Therefore we obtain that $\C\not \in \H$ at $v\in
  \TM$. Using Lemma \ref{lemma:H_C} we get \eqref{eq:H+C}.
\end{proof}

\bigskip

\noindent
\emph{Proof of Theorem \ref{thm:compute_m}}. %
Let us denote by $\kappa (\in \mathbb N)$ the dimension of $\H$.  We will show in
the proof that in a neighbourhood of any $v\in \TM$ one can find exactly
$\mathrm{Codim}\, \H=2n-\kappa$ locally functionally independent elements in
$\elh 2$.

As the spray $S$ is metrizable, therefore there exists a Finsler energy
function $E_o \in \elh 2$ associated to $S$.  From \eqref{eq:HC} and
\eqref{eq:H+C}, we have $\HC=\H \oplus \C$ end therefore $\dim \HC =
\kappa+1$.  Both $\H$ and $\HC$ are involutive smooth distributions on
$\TM$. By Frobenius integrability theorem one can find a coordinate system
$(U, z)$ of $\TM$ in a neighborhood of $v_0 \in \TM$, such that $z^i(v)=1$,
$z(U)=]1\!-\!\epsilon,1\!+\!\epsilon[^{2n}$ and for all $z_0^{\kappa+1},...,
z_0^{2n}$ with $|1\!-\!z_0^i|<\epsilon$, the sets
\begin{displaymath} 
  \mathcal O_\tau\!=\!\{w\!\in\! U \,|\, z^i(w)\!=\!z_0^{i}, \, \kappa\!+\!1
  \leq i   \leq  2n\},
  \qquad
  \mathcal N \!=\!\{w\in U \,|\, z^i(w)\!=\!z_0^{i}, \, \kappa\!+\!2 
  \leq  i  \leq 2n\}
\end{displaymath}
are integral manifolds of the distributions $\H$ respectively $\HC$ over
$U$. Moreover, by the regularity of the parallel translation the coordinate
neighbourhood $U$ can be choosen such a way that for any $v\in U$ the orbit
$\mathcal O_\tau(v)$ of $v$ under the parallel translations has only one
component in $U$. In this case different $z^i$ coordinates ($\kappa+1 \leq i
\leq 2n$) correspond to different orbits, hence these coordinates parametrise
the orbits of the parallel translations on $U$.  Let
\begin{equation}
  \label{eq:12}
  \H  = Span\left\{ \frac{\partial}{\partial z^1},\dots, 
    \frac{\partial}{\partial  z^{\kappa}} \right\}, 
  \qquad 
  \HC= Span \left\{\frac{\partial}{\partial z^1},\dots, 
    \frac{\partial}{\partial z^{\kappa}},
    \frac{\partial}{\partial z^{\kappa\!+\!1}}\right\}
\end{equation}
where 
\begin{math}
  Span \left\{ \frac{\partial}{\partial z^{\kappa\!+\!1}} \right\} = Span
  \left\{ \C \right\},
\end{math}
that is,
\begin{math}
  \frac{\partial}{\partial z^{\kappa\!+\!1}}= \lambda \C,
\end{math}
with $\lambda (v_0) \neq 0$.  Hence, from \eqref{eq:5} we get
\begin{equation}
  \label{eq:11}
  \frac{\partial E_o}{\partial z^{\kappa+1}} (v_0) 
  = \lambda (\C E_o)(v_0) = 2 \lambda  E_o(v_0)\neq 0.
\end{equation}
Considering the set of $0-$homogeneous holonomy invariant functions, we have
\begin{equation}
  \label{eq:hol_0}
  \theta\in \holh 0 \quad \Longleftrightarrow \quad
  \left\{ \
    \begin{aligned}
      \mathcal L_X\theta &= 0, \ \forall \ X \!\in\! \H
      \\
      \mathcal L_{\C} \theta &= 0,
    \end{aligned}
    \ \right\}
  \quad \Longleftrightarrow \quad
  \mathcal L_X\theta = 0, \ \forall \ X \!\in\! \HC.
\end{equation}
From \eqref{eq:12} and from \eqref{eq:hol_0}, it follows that $\theta\in \holh
0$ on $U$ if and only if it is a function of the variables
\begin{math}
  z^{\kappa+2}, \dots , z^{2n},
\end{math}
that is
\begin{equation}
  \label{eq:15}
  \theta = \theta(z^{\kappa+2}, \dots ,z^{2n}).
\end{equation}
By using a convenient bump function $\psi^i$ in each variable $z^i$ ($\kappa+2
\leq i \leq 2n$), we obtain smooth functions $\theta_i := \psi^i \cdot z^i \in
C^\infty(\TM)$ (no summation convention is used here), such that
$\theta_i(v_0)=1$, $\frac{d\theta_i}{dz^i}(v_0)=1$ and
$\mathrm{supp}(\theta_i)\subset U$.  It is clear that
\begin{equation}
  \label{eq:theta}
  \theta_{\kappa+2},\ \dots \ , \theta_{2n}
\end{equation}
are functionally independent $0-$homogeneous holonomy invariant functions on
some neighbourhood $\widetilde{U}\subset U$ of $v_0$ and any elements of
$\holh 0$ can be expressed on $\widetilde{U}$ as their functional combination.
The functions \eqref{eq:theta} can be used to \lq\lq\emph{modify}" the
original Euler-Lagrange function $E_o$ to obtain new elements of $\elh 2$,
functionally independent on $\widetilde{U}$. 

Indeed, let $E_i:=(1+\theta_i) E_o$ for $\kappa+2 \leq i \leq 2n$, and set
$E_{\kappa+1}:=E_o$.  Since $1+\theta_i$ are $0-$homogeneous and $E_o$ is
$2$-homogeneous holonomy invariant functions we get that
\begin{equation}
  \label{eq:7}
  E_{\kappa+1}, \, E_{\kappa+2}, \, \dots \, ,   E_{2n},
\end{equation}
are $2-$homogeneous holonomy invariant functions. Then, by Lemma
\ref{lemma:theta}, the elements of \eqref{eq:7} are in $\elh 2$. Moreover, by
the construction we have
\begin{math}
  dE_i = d\big((1\!+\!\theta_i)E_o\big) = \frac{d\theta_i }{dz^i} E_o dz^i +
  (1\!+\!\theta_i) dE_o
\end{math}
(with no summation on $i$). Hence,
\begin{displaymath}
  (dE_i)_{v_0} = (dz^i)_{v_0} + \bigl(1+\theta_i(v_0)\bigr) (dE_o)_{v_0}.
\end{displaymath}
and taking \eqref{eq:11} into account we get
\begin{alignat*}{1}
  dE_{\kappa+1}\wedge dE_{\kappa+2}\wedge \dots \wedge dE_{2n}(v_0)&
  = \big(dE_o\wedge (dz^{\kappa+2} + \theta_{\kappa+2} dE_o) \wedge \dots
  \wedge (dz^{2n} + \theta_{2n} dE_o)\big)_{v_0}
  \\
  & = (dE_o\wedge dz^{\kappa+2} \wedge \dots \wedge dz^{2n})_{v_0}
  \\
  & = 2 (\lambda E_o \, dz^{\kappa+1}\wedge dz^{\kappa+2} \wedge \dots \wedge
  dz^{2n})_{v_0} \neq 0,
\end{alignat*}
that is, the functions \eqref{eq:7} are functionally independent in some
neighbourhood $\widehat{U}\subset \widetilde{U}$ of $v_0\in \TM$.

On the other hand, let us suppose that $E\in \elh 2$ is a $2-$homogeneous
Euler-Lagrange function associated to $S$.  Using Lemma \ref{lemma:theta}, we
get that $\theta=E/E_o$ is a $2-$homogeneous holonomy invariant
function. Then, $\theta$ has the form \eqref{eq:15} on $U$ and it can thus be
expressed as a functional combination
\begin{math}
  \theta = \Psi(\theta_{\kappa+2}, \dots, \theta_{2n}).
\end{math}
Since $E_o=E_{\kappa+1}$ we get 
\begin{displaymath}
  E = \Psi \!\left(\frac{E_{\kappa+2}}{E_{\kappa+1}}, \dots,
    \frac{E_{2n}}{E_{\kappa+1}} \right) \cdot E_{\kappa+1}
\end{displaymath}
showing that $E$ is locally a functional combinations of the elements
\eqref{eq:7}.  \qed

\bigskip

\subsection*{Metrizability freedom} \
\medskip

\noindent
Similar to the notion of variational freedom, one can introduce the
\emph{metrizability freedom} of a spray $S$ showing how many functionally
independent Finsler energy functions and hence how many essentially different
Finsler metrics exist for $S$. To be more precise, let $\elhp 2$ be the set of
Finler energy functions, that is the set of regular $2-$homogeneous Lagrange
function with \eqref{eq:1} positive definite. Alike to \eqref{def:v} and
\eqref{def:vh} we set

\begin{definition} 
  \label{def:m}
  Let $S$ be a spray. If $S$ is metrizable then its \emph{metrizability
    freedom} is $\m (\in \mathbb N)$ where $\m={rank}\, (\elhp 2)$. If $S$ is
  non-variational then we set $\m=0$.
\end{definition}
We have
\begin{proposition}
  \label{prop:met}
  Let $S$ be a metrizable spray such that the parallel translation with
  respect to the associated connection is regular. Then
  \begin{math}
    \m  =  \mathrm{codim} \, \H.
  \end{math}
\end{proposition}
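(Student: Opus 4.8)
The plan is to reduce the statement to Theorem~\ref{thm:compute_m} by observing that the Finsler energy functions form an open subfamily of the $2$-homogeneous Euler-Lagrange functions. First I would record the inclusion $\elhp 2 \subset \elh 2$: a Finsler energy function is, by definition, a regular $2$-homogeneous Lagrange function whose Hessian \eqref{eq:1} is positive definite, hence in particular it is a regular element of $\elh 2$. Since any functionally independent family drawn from $\elhp 2$ is also a functionally independent family in $\elh 2$, the maximal such number cannot increase, and this yields at once the upper bound $\m = \mathrm{rank}(\elhp 2) \le \mathrm{rank}(\elh 2) = \vh 2 = \mathrm{codim}\,\H$, where the final equality is Theorem~\ref{thm:compute_m}.

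For the reverse inequality I would revisit the explicit construction carried out in the proof of Theorem~\ref{thm:compute_m}. Because $S$ is metrizable, the function $E_o$ there may be taken to be a Finsler energy function, $E_o \in \elhp 2$, so its Hessian is positive definite on a neighbourhood of $v_0$. The construction produces $\mathrm{codim}\,\H = 2n-\kappa$ functionally independent elements $E_{\kappa+1},\dots,E_{2n}\in\elh 2$ of the form $E_i = (1+\theta_i)E_o$ with $\theta_i \in \holh 0$ supported in $U$; the only additional fact required now is that the $E_i$ can be arranged to be Finsler energy functions. Since positive-definiteness of the Hessian is an open condition in the $C^2$-topology, I would replace $\theta_i$ by $\varepsilon\theta_i$ and set $E_i := (1+\varepsilon\theta_i)E_o$ for a sufficiently small $\varepsilon>0$: on the closure of a smaller neighbourhood of $v_0$ the Hessian of $E_i$ is then a small perturbation of the positive-definite Hessian of $E_o$ and remains positive definite, so $E_i \in \elhp 2$.

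It remains to check that functional independence survives this rescaling, which it does because the computation is unchanged in structure. At $v_0$ one has
\[
  (dE_i)_{v_0} = \varepsilon E_o(v_0)\,(dz^i)_{v_0} + \bigl(1+\varepsilon\theta_i(v_0)\bigr)(dE_o)_{v_0},
\]
and therefore
\[
  \bigl(dE_{\kappa+1}\wedge\dots\wedge dE_{2n}\bigr)_{v_0}
  = \varepsilon^{\,2n-\kappa-1}\,E_o(v_0)^{\,2n-\kappa-1}\,
    \bigl(dE_o\wedge dz^{\kappa+2}\wedge\dots\wedge dz^{2n}\bigr)_{v_0},
\]
which by \eqref{eq:11} is nonzero for every $\varepsilon\neq 0$. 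Thus $E_{\kappa+1},\dots,E_{2n}$ are functionally independent Finsler energy functions near $v_0$, giving $\m \ge \mathrm{codim}\,\H$; together with the upper bound this proves the equality. The only genuine point to watch is the uniformity of the smallness of $\varepsilon$, namely that a single $\varepsilon$ makes all of the finitely many perturbed functions $E_{\kappa+2},\dots,E_{2n}$ positive definite simultaneously on a common neighbourhood of $v_0$; this is immediate since there are finitely many of them and each positivity condition is open.
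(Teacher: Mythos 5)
Your proof is correct and takes essentially the same route as the paper: the paper's own proof also takes $E_o$ to be a Finsler energy function and rescales the perturbations, setting $E_i=(1+c_i\theta_i)E_o$ with sufficiently small nonzero constants $c_i$ so that positive definiteness is preserved, then invokes the functional-independence computation of Theorem~\ref{thm:compute_m}. The only cosmetic differences are that you use a single $\varepsilon$ instead of per-index constants (harmless, as you note, by finiteness) and that you package the conclusion as an upper bound via $\elhp 2 \subset \elh 2$ plus a lower bound, where the paper instead argues directly that the perturbed family locally generates $\elhp 2$.
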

\begin{proof}
  Using the reasoning of Theorem \ref{thm:compute_m} we can easily prove
  Proposition \ref{prop:met}. We just remark that, using the notation
  introduced in the proof of Theorem \ref{thm:compute_m}, we have
  $E_0\in \elhp 2$ and for any $i\!=\!\kappa\!+\!2, \dots , 2n$, a
  sufficiently small nonzero constant $c_i\in\R$ can be choosen for
  $E_i\!=\!(1+c_i\theta_i)E_o$ to remain positive definite. Hence with
  $E_o=E_{\kappa+1}$ we get
  \begin{math}
    \left\{ E_{\kappa+1}, \, E_{\kappa+2}, \, \dots \, , E_{2n}
    \right\}\subset \elhp 2.
  \end{math}
  Similar argument that we used in the proof of Theorem \ref{thm:compute_m}
  shows that these elements are locally functionally independent and they
  locally generate $\elhp 2$ which proves the Proposition.
\end{proof}

\bigskip

\section{Examples: Isotropic sprays}
\label{sec:5}

Let $S$ be a spray and $R$ its curvature tensor defined by \eqref{eq:R}.  The
\emph{Jacobi endomorphism} $\Phi$ of the $S$ is defined by
\begin{equation}
  \label{eq:6}
  \Phi=i_SR.
\end{equation}
The Ricci curvature, $\text{Ric}$, and the Ricci scalar, $\rho$ are given by
$\text{Ric}=(n-1)\rho=R^i_i=\text{Tr}(\Phi)$ \cite{shen-book1}.

\begin{definition} 
  A spray $S$ is said to be \emph{isotropic} if its Jacobi endomorphism has the form
  \begin{displaymath}
    \Phi=\rho J-\alpha\otimes C,
  \end{displaymath}
  where 
  $\rho\in C^\infty(\TM)$ is the Ricci scalar and $\alpha$ is a semi-basic $1$-form 
  on $\T M$.
\end{definition}
\begin{lemma}
  \label{ker-Jacob}
  For an isotropic sprays with non vanishing Ricci scalar one has
  \begin{math}
    \dim \H \geq 2n-1.
  \end{math}
\end{lemma}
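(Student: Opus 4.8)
The plan is to bound the vertical part of the holonomy distribution from below by the rank of the Jacobi endomorphism. By Remark \ref{rem:H_H} the holonomy distribution splits as $\H = H\TM \oplus v(\H)$, so $\dim \H = n + \dim v(\H)$, and it suffices to prove $\dim v(\H) \geq n-1$. The same remark supplies the inclusion $\mathrm{Im}\, R \subset v(\H)$, which is what will transfer the curvature information into $\H$.

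First I would connect $\Phi = i_S R$ to $\mathrm{Im}\, R$. Since $S$ is a spray, Euler's relation $N^j_i y^i = 2G^j$ gives $hS = y^i \frac{\delta}{\delta x^i} = S$, so $S$ is horizontal. Hence $\Phi(X) = (i_S R)(X) = R(S,X) \in \mathrm{Im}\, R$ for every $X$, and therefore $\mathrm{Im}\, \Phi \subset \mathrm{Im}\, R \subset v(\H)$. This reduces the statement to the pointwise estimate $\mathrm{rank}\, \Phi \geq n-1$.

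Then I would read off this rank from the isotropy hypothesis. Writing $\alpha = \alpha_k\, dx^k$, the condition $\Phi = \rho J - \alpha \otimes \C$ becomes, in local coordinates, $\Phi^i_k = \rho\, \delta^i_k - y^i \alpha_k$; that is, the matrix of $\Phi$ is the rank-one perturbation $\rho\, I - y\,\alpha^{\!\top}$ of $\rho$ times the identity. If $\alpha$ vanishes at the point, then $\Phi = \rho J$ has rank $n$. Otherwise $\ker \alpha$ is $(n-1)$-dimensional and, for $v^k$ with $\alpha_k v^k = 0$, one computes $\Phi^i_k v^k = \rho\, v^i$, so $\Phi$ maps $\ker\alpha$ onto its $\rho$-multiple; since $\rho \neq 0$ this restriction is injective, whence $\mathrm{rank}\, \Phi \geq n-1$ at every point. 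Combining the three steps gives $\dim v(\H) \geq \dim \mathrm{Im}\,\Phi = \mathrm{rank}\, \Phi \geq n-1$, and hence $\dim \H \geq 2n-1$.

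I expect the only delicate point to be the first step, namely justifying that $\mathrm{Im}\, \Phi$ lies inside $\mathrm{Im}\, R$ (and hence inside $v(\H)$); this rests on the fact that for a spray $S$ coincides with its own horizontal part. The concluding rank count is elementary linear algebra for a rank-one perturbation of a nonzero multiple of the identity.
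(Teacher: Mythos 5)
Your proof is correct and takes essentially the same approach as the paper: both arguments bound $\dim \mathrm{Im}\,\Phi \geq n-1$ from the isotropy form of $\Phi$ together with $\rho\neq 0$, and then conclude via $\mathrm{Im}\,\Phi \subset \mathrm{Im}\,R \subset v(\H)$ and the splitting $\H = H\TM \oplus v(\H)$ of Remark \ref{rem:H_H}. The only (cosmetic) difference is that the paper obtains the rank by computing the kernel intrinsically, $\ker\Phi = V\TM \oplus Span\{S\}$, whereas you read it off from the coordinate matrix $\rho I - y\,\alpha^{\top}$ restricted to $\ker\alpha$.
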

\begin{proof}
  Let $X\in H\TM$ be a horizontal vector. We have
  \begin{equation}
    \label{eq:8}
    \Phi(X)\!=\!0 \quad \Longleftrightarrow \quad 
    \rho JX\!-\!\alpha(X) \C\!=\!0  \quad \Longleftrightarrow \quad
    JX \!=\! \frac{i_X\alpha}{\rho}\, J S
    \quad    \Longleftrightarrow \quad X \!=\! \frac{i_X\alpha}{\rho}S.    
  \end{equation}
   By (\ref{eq:8}),  
  $\ker \, \Phi \cap H\TM \,=\, Span\{S\}$ and, therefore, using the
  semi-basic property of $\Phi$, we get 
  \begin{math}
    \ker \, \Phi \,=\, V\TM \oplus S
  \end{math}
  and 
  \begin{math}
    \dim \ker \, \Phi = n+1.
  \end{math}
  Hence, we have $\dim (\mathrm{Im} \Phi) =2n-(n+1)=n-1$.  On the other hand,
  by \eqref{eq:6}, $\Phi(X)\!=\!(i_SR)(X)\!=\!R(S,X)$. Thus
  $\mathrm{Im}\,\Phi\subset \mathrm{Im} \, R$ and $\dim (\mathrm{Im} R) \geq
  n-1$.  By Remark \ref{rem:H_H}, the result follows.
\end{proof}

\begin{proposition}
  \label{thm:isotropic}
  Let $S$ be an isotropic spray on an $n$-dimensional manifold $M$ with
  regular parallel translation. Then we have $\vh 2 \in \{0,1,n\}$. More
  precisely we have the following possibilities:
  \begin{enumerate}
  \item [(a)] $\vh 2=0$ if and only if $R\neq 0$ and $S$ is not variational
    (in this case $R\neq 0$);
  \item [(b)] $\vh 2=1$ if and only if $R\neq 0$ and $S$ is variational;
  \item [(c)] $\vh 2=n$, that is maximal, if and only if $R=0$.
  \end{enumerate}
\end{proposition}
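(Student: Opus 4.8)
The plan is to reduce the whole statement to a computation of $\mathrm{codim}\,\H$ and an application of the machinery behind Theorem \ref{thm:compute_m}. Recall that $H\TM\subseteq\H$ always, so $n\leq\dim\H\leq 2n$ and hence $0\leq\mathrm{codim}\,\H\leq n$; by Remark \ref{rem:H_H} we have $\H=H\TM\oplus v(\H)$ with $\mathrm{Im}\,R\subseteq v(\H)$, and $R\equiv 0\iff\H=H\TM\iff\mathrm{codim}\,\H=n$. Thus the dichotomy $R=0$ versus $R\neq 0$ governs the size of $\H$. I would first observe that whenever $S$ is $h(2)$-variational, i.e. there is a nonzero regular $E_o\in\elh 2$, the argument in the proof of Theorem \ref{thm:compute_m} goes through near any point $v_0$ with $E_o(v_0)\neq 0$: there it uses only $\mathcal{L}_\C E_o(v_0)=2E_o(v_0)\neq 0$ together with $\mathcal{L}_X E_o=0$ for $X\in\H$ (Lemma \ref{thm:elh_holh}), which already force $\C\notin\H$, plus regularity of the parallel translation. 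Hence under mere variationality one still gets $\vh 2=\mathrm{codim}\,\H$, and in particular the existence of a regular $E_o$ forces $\C\notin\H$, so $\mathrm{codim}\,\H\geq 1$.

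Next I treat the case $R=0$, which will give (c). Here $\H=H\TM$, so $\mathrm{codim}\,\H=n$. Since the curvature vanishes the horizontal distribution is integrable and the local holonomy is trivial, so parallel translation between nearby fibres is path-independent; transporting a positive-definite inner product chosen at one point yields a $2$-homogeneous, positive-definite, holonomy-invariant function $E_o$, which by Lemma \ref{thm:elh_holh} lies in $\elh 2$ and is a Finsler energy function. Thus $S$ is metrizable and Theorem \ref{thm:compute_m} gives $\vh 2=\mathrm{codim}\,\H=n$. This is the ``$\Leftarrow$'' of (c); the converse will follow once (a),(b) show $\vh 2\leq 1$ whenever $R\neq 0$ (using $n\geq 2$ to separate $n$ from $\{0,1\}$; for $n=1$ one has $R\equiv 0$ and $\vh 2=1=n$ trivially).

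For the case $R\neq 0$ (cases (a),(b)) the crux is the bound $\mathrm{codim}\,\H\leq 1$, i.e. $\dim v(\H)\geq n-1$. When the Ricci scalar $\rho$ does not vanish this is exactly Lemma \ref{ker-Jacob}. I would therefore first check whether $R\neq 0$ forces $\rho\neq 0$ for an isotropic spray, using $\Phi=i_SR$ and $\Phi(S)=R(S,S)=0$, which yields the relation $\alpha(S)=\rho$; and should $\rho=0$ remain compatible with $R\neq 0$, I would instead bound $\dim\,\mathrm{Im}\,R\geq n-1$ directly from the explicit isotropic form of $R^i_{jk}$, noting that the vertical vectors $R^i_{1k}\frac{\partial}{\partial y^i}$, $k=2,\dots,n$, are independent as soon as $\alpha\neq 0$. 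Either way $\dim v(\H)\geq n-1$ and $\mathrm{codim}\,\H\leq 1$. Now split: if $S$ is not variational, then by definition $\vh 2=0$, which is (a); if $S$ is variational, then a regular $E_o$ exists, so $\mathrm{codim}\,\H\geq 1$ by the first step, whence $\mathrm{codim}\,\H=1$ and $\vh 2=1$, which is (b).

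Finally I assemble the equivalences: $\vh 2=n\iff R=0$ (from the two cases, using $n\geq 2$), $\vh 2=1\iff(R\neq 0$ and $S$ variational$)$, and $\vh 2=0\iff(R\neq 0$ and $S$ not variational$)$; the parenthetical ``in this case $R\neq 0$'' in (a) simply records that ``not variational'' already forces $R\neq 0$, since $R=0$ makes $S$ metrizable. The step I expect to be the main obstacle is the dimension bound of the third paragraph in its degenerate direction: confirming that an isotropic spray with $R\neq 0$ really gives $\mathrm{codim}\,\H\leq 1$ even when $\rho=0$, i.e. either proving $\rho\neq 0$ whenever $R\neq 0$ or controlling $\mathrm{Im}\,R$ by hand. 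A secondary subtlety is justifying $\vh 2=\mathrm{codim}\,\H$ under variationality alone (a possibly indefinite regular $E_o$) rather than full metrizability, which is handled by localising near points where $E_o\neq 0$ and using constancy of $\mathrm{codim}\,\H$.
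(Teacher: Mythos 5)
Your proposal follows the paper's own architecture almost step for step: Remark \ref{rem:H_H} to identify $R=0$ with $\H=H\TM$, parallel transport of a Minkowski norm to get metrizability in the flat case (c), Lemma \ref{ker-Jacob} together with transversality of $\C$ to pin down $\dim\H=2n-1$ when $R\neq 0$, and the formula $\vh 2=\mathrm{codim}\,\H$ of Theorem \ref{thm:compute_m} to convert codimension into variational freedom. The genuine differences are in the bookkeeping, and they are mostly to your credit. In (a) the paper passes from ``not $h(2)$-variational'' to ``not variational'' by citing Proposition \ref{sec:szenthe}, whose hypothesis is analyticity; you instead run the contrapositive ``variational $\Rightarrow \vh 2\in\{1,n\}$'' through (b) and (c) (though note your (b) argument starts from a regular element of $\elh 2$, so the identification of ``variational'' with ``$h(2)$-variational'' is still hiding there, for you just as for the paper). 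You also correctly flag two places where the paper's proof outruns its own lemmas: Lemma \ref{ker-Jacob} assumes the Ricci scalar $\rho$ is non-vanishing while (b) assumes only $R\neq 0$, and Theorem \ref{thm:compute_m} and Lemma \ref{thm:H+C} assume metrizability while in (b) one only has variationality. Your $n=1$ caveat is likewise a genuine (minor) correction: for $n=1$ one has $R\equiv 0$ and $\vh 2=1=n$, so the equivalence in (b) needs $n\geq 2$.

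That said, your two proposed repairs are sketches, and one is doubtful as stated. If $\rho=0$ then $\Phi=-\alpha\otimes\C$ has rank at most one, so no bound of the form $\dim \mathrm{Im}\,\Phi\geq n-1$ is available, and your fallback claim that the vectors $R^i_{1k}\,\partial/\partial y^i$, $k=2,\dots,n$, are independent ``as soon as $\alpha\neq 0$'' concerns $\mathrm{Im}\,R$, which you never compute; it is unsubstantiated. The clean repair, available exactly in case (b) where the bound is needed, is to use variationality itself: for a spray admitting a regular $E_o\in\elh 2$, the Jacobi endomorphism is self-adjoint with respect to the metric of $E_o$, which forces $\alpha$ to be proportional to $d_JE_o$; combined with $i_S\alpha=\rho$ this gives $\alpha=0$, hence $\Phi=0$, on any open set where $\rho\equiv 0$, and since $R$ is recovered from $\Phi$ for sprays (via $3R=[J,\Phi]$), $\rho\equiv 0$ would force $R\equiv 0$. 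Thus $R\neq 0$ plus variational yields $\rho\neq 0$ somewhere, and constancy of $\mathrm{rank}\,\H$ spreads $\dim\H\geq 2n-1$ everywhere. Similarly, your localisation of Theorem \ref{thm:compute_m} near $\{E_o\neq 0\}$ does give $\mathrm{codim}\,\H\geq 1$ everywhere by constant rank, but it does not give $\C\notin\H$ (i.e.\ regularity of $\HC$) at zeros of $E_o$, which the Frobenius argument needs pointwise; the paper has the identical gap, since it too applies Lemma \ref{thm:H+C} and Theorem \ref{thm:compute_m} to a merely variational spray. In short: same route as the paper, with a sharper eye for its weak joints, but the repairs you outline still need to be carried out.
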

\begin{proof}
  Let us first consider \emph{(c)}.  We remark that if $R=0$, then the
  holonomy is trivial and $S$ is Riemann and Finsler metrizable variational:
  an arbitrary Minowski norm extended through parallel translation defines a
  Finsler norm for $S$.  Moreover, by using Remark \ref{rem:H_H} and Theorem
  \ref{thm:compute_m}, we have
  \begin{displaymath}
    \vh 2=n \ \Leftrightarrow \  \mathrm{codim} \, \H = n \ \Leftrightarrow \  
    \dim \H = n \ \Leftrightarrow \  R= 0.
  \end{displaymath}
\noindent
\emph{(a)} We have $\vh 2=0$ if and only if $S$ is not $h(2)-$variational.  In
that case we have necessarily $R\neq 0$. Using Theorem \ref{sec:szenthe} we
get that $S$ cannot be variational.

\noindent
\emph{(b)} Let $\vh 2=1$. Then $S$ is $h(2)$-variational with an essentially
unique $h(2)-$homogeneous regular Lagrange function. Then
$\mathrm{codim} \, \H <n$ and therefore $H\TM \subsetneq  \H$. Using Remark
\ref{rem:H_H} we obtain that $R\neq 0$. Conversely, if $S$ is variational and
$R\neq 0$ then by Lemma \ref{ker-Jacob}, we have $\dim \H \geq 2n-1$. On the
other hand, Lemma \ref{thm:H+C} shows that $\C\not \in \H$ and therefore
$\dim \H \leq 2n-1$. From the two inequalities, we have $\dim \H = 2n-1$ and
hence $\mathrm{codim} \, \H = 1$.
\end{proof}

\bigskip

\subsection*{Explicite examples}

\begin{example}[$\vh 2=0$, $\mathrm{codim}\,\H=0$] \ %
  \\ Let $M= \{(x^1,x^2)\in\mathbb{R}^2:\, x^2>0\}$ and $S$ be the spray
  \eqref{eq:spray} given by the coefficents
  \begin{displaymath}
    G^1:=  \varphi \, y^1+ \frac{y^1y^2}{2x^2}, \qquad
    G^2:=\varphi \, y^2-\frac{(y^1)^2}{4},
  \end{displaymath} 
  where we used the notation $\varphi:=\big({x^2(y^1)}^2+(y^2)^2\big)^{1/2}$.
  The spray $S$ is isotropic and the coefficients of the nonlinear connection
  are given by
  \begin{displaymath}
    N_1^1=\frac{y^2}{2x^2}+\varphi+\frac{x^2(y^1)^2}{\varphi},\quad
    N_1^2=-\frac{y^1}{2}+\frac{x^2y^1y^2}{\varphi},\quad
    N_2^1=\frac{y^1}{2x^2}+\frac{y^1y^2}{\varphi}, \quad
    N_2^2=\varphi+\frac{(y^2)^2}{\varphi}.
  \end{displaymath}
  The horizontal basis is $\{h_1,h_2\}$ where
  \begin{alignat*}{1} 
    h_1&=\frac{\partial}{\partial x^1}-\left(\frac{y^2}{2x^2}+ \varphi+
      \frac{x^2(y^1)^2}{\varphi}\right)\frac{\partial}{\partial
      y^1}+\left(\frac{y^1}{2}-\frac{x^2y^1y^2}{\varphi}\right)
    \frac{\partial}{\partial y^2},
    \\
    h_2&=\frac{\partial}{\partial
      x^2}-\left(\frac{y^1}{2x^2}+\frac{y^1y^2}{\varphi}\right)
    \frac{\partial}{\partial y^1}-\left(\varphi
      +\frac{(y^2)^2}{\varphi}\right)\frac{\partial}{\partial y^2}.
  \end{alignat*}
  We have 
  \begin{alignat*}{1}
    v_{1}:=[h_1,h_2]&=\frac{4(x^2)^2+1}{4(x^2)^2}
    \left(y^1\frac{\partial}{\partial y^2}-y^2\frac{\partial}{\partial
        y^1}\right)
    \\
    v_{2}:=\big[[h_1,h_2],h_1\big]& =\frac{4(x^2)^2+1}{4x^2\varphi}
    \left(y^1y^2\frac{\partial}{\partial y^1}+\big(\varphi^2+(y^2)^2\big)
      \frac{\partial}{\partial y^2}\right).
  \end{alignat*}
  Being $v_1$ and $v_2$ linearly independent we have
  $\H =Span\{h_1, h_2, v_1, v_2\}= T\TM$. Consequently, $\C \in \H$ and
  according to Lemma \ref{thm:H+C} the spray is not variational; that is
  $\vh 2=0$.
\end{example}

\medskip

\begin{example}[$\vh 2=0$, $\mathrm{codim}\,\H>0$] \ %
  \label{ex:2}
  \\
  Let $M= \{(x^1,x^2)\in\mathbb{R}^2:\, x^2>0\}$ and $S$ the spray
  \eqref{eq:spray} given by the coefficients
  \begin{math}
    G^1=\frac{(y^1)^2}{2x^2},
  \end{math}
  $G^2=0$. The non zero coefficient of the non linear connection is
  \begin{math}
    N_1^1=\frac{y^1}{x^2}.
  \end{math}
  The horizontal basis $\{h_1,h_2\}$ and their commutator are
  \begin{displaymath}
    h_1=\frac{\partial}{\partial x^1}-\frac{y^1}{x^2}\frac{\partial}{\partial
      y^1},  \qquad %
    h_2=\frac{\partial}{\partial x^2},
    \qquad %
    v:=[h_1,h_2]=-\frac{y^1}{(x^2)^2}\frac{\partial}{\partial y^1}.
  \end{displaymath}
  One has
  \begin{math}
    \H = Span \{h_1, h_2, v\},
  \end{math}
  $\dim \H \!=\!3$ and $\mathrm{codim}\,\H \!=\!1$. For any holonomy invariant
  2-homogeneous function $E\in \holh 2$ we have
  \begin{math}
    {\mathcal L}_{h_1}E= {\mathcal L}_{h_2}E= {\mathcal L}_{v}E=0.
  \end{math}
  From the last equation we get $\frac{\partial E}{\partial y^1}=0$ and
  threfore $E$ cannot be a regular Lagrange function. From Lemma
  \ref{thm:elh_holh}, it follows that $S$ has no regular Euler-Lagrange
  function and, therefore, it cannot be variational.
\end{example}

\medskip

\begin{example}[$\vh 2=1$] \
  \\
  Let us consider on the standard unit ball $\mathbb B^n \subset \mathbb R^n$
  and the spray \eqref{eq:spray} where
  \begin{math}
    \displaystyle G^i=-\frac{\mu\langle x,y \rangle}{1+\mu \, |x|^2}y^i
  \end{math}
  with $\mu\in \mathbb R\setminus \{0\}$.  The curvature of the spray is non
  zero and isotropic. The spray is metrisable and hence variational: it is the
  geodesic spray of the Riemannian energy function
  \begin{equation} 
    \label{eq:16}
    E_\mu=\frac{1}{2}\frac{\mu(|x|^2|y|^2-\langle
      x,y\rangle^2)+|y|^2}{(1+\mu|x|^2)^2}
  \end{equation}
  of constant flag curvature $\mu\neq 0$.  From Proposition
  \ref{thm:isotropic}, we have $\vh 2=1$. Hence \eqref{eq:16} is the
  essentially unique energy function corresponding to the given spray.
\end{example}

\medskip

\begin{example}
  [$\vh 2$ is maximal] \
  \\
  One can consider the trivial example where $M=\R^n$ and the spray
  \eqref{eq:spray} where $G^i=0$. In this case the parallel translation is
  regular and the holonomy group is trivial. Hence we have $\vh 2=n$.
  \\
  We prefer to give also another, not so obvious example: Let
  $\mathbb B^n \subset \mathbb{R}^n$ be the standard unit ball and $S$ the
  spray with
  \begin{equation}
    \label{eq:10}
    G^i=-\frac{\langle a,y \rangle}{1+\langle a,x \rangle}y^i,
  \end{equation}
  where $a\in \mathbb{R}^n$ is a constant vector with $|a|<1$.  Since $R=0$,
  then $\H=H\TM$, the horizontal distribution. Hence, by Theorem
  \ref{thm:compute_m}, the metric freedom is maximal.  We remark that
  S.S.~Chern and Z.~Shen investigated in \cite{Shen-book} the family of
  Riemannian metrics associated to the norms
  \begin{equation}
    \label{fa}
    F_a=\frac{\sqrt{1-|a|^2}}{(1+\langle a,x\rangle)^2}\sqrt{|y|^2
      -\frac{2\langle a,y \rangle\langle x,y \rangle }{1+\langle a,x\rangle}
      -\frac{(1-|x|^2)\langle a,y \rangle^2 }{1+\langle a,x\rangle}}.
  \end{equation}
  The geodesic equation of \eqref{fa} is \eqref{eq:10}, but one can find other
  generating Finsler metrics too. Indeed, putting
  \begin{math}
      \displaystyle z^i= ((1+\langle a,x\rangle)y^i - \langle a,y \rangle
      x^i)/(1+\langle a,x\rangle)^2
  \end{math}
  and considering a 1-homogeneous function $\phi \colon \R^n\to \R$ we get
  \begin{equation}
    \label{general-f}
    F_\phi(x,y)=\phi \bigl(z^1(x,y),\dots, z^n(x,y)
    \bigr )
  \end{equation}
  such that $E_\phi=\frac{1}{2}F_\phi^2$ is a (not necessarily regular)
  element of $\elh 2$.  Therefore, if $F_\phi$ satisfies the regularity
  condition \eqref{eq:1}, then it is a projectively flat Finsler mertic of
  zero flag curvature with geodesic spray given by \eqref{eq:10}.  The family
  (\ref{fa}) can be considered as a special case of (\ref{general-f}) by
  choosing $\phi (z)= \bigr(\langle z,z\rangle-\langle
  a,z\rangle^2\bigl)^{1/2}$.
\end{example}

\bigskip

\section{Open problems}

In this paper we considered sprays and investigated how many essentially
different $2-$homogeneous regular Lagrange functions and $h(2)$-variational
principles exist for a given spray. We obtained the formula
\eqref{eq:thm_vh_2} in the metrizable case.  The first problem would be 

\medskip

\noindent
\textbf{Problem 1.}  Determine the $h(2)-$variational freedom without the
metrizability assumption.

\medskip

For different degrees of homogeneity, we can also consider the following:

\smallskip

\noindent
\textbf{Problem 2.}  Determine how many essentially different $k$--homogeneous
regular Lagrange functions and variational principles exist for a given spray.

\medskip 

The most interesting challenge might be the general case:

\medskip

\noindent
\textbf{Problem 3.}  Determine how many essentially different (not necessary
homogeneous) variational principles exist for a given spray.

\medskip 

This last problem can be hard to solve, since in the non-homogeneous case
there is no simple correspondence between Euler-Lagrange functions and
holonomy invariant functions.

\end{document}